\newtheorem{theorem}{Theorem}[section]
\newtheorem{corollary}[theorem]{Corollary}
\newtheorem{lemma}[theorem]{Lemma}
\newtheorem*{proposition*}{Proposition}
\newtheorem*{theorem*}{Theorem}
\newtheorem{conjecture}{Conjecture}
\newtheorem*{conjecture*}{Conjecture}
\newtheorem*{fact*}{Fact}
\newtheorem*{lemma3ii*}{Lemma 3$(ii)$}
\newtheorem*{lemma3iii*}{Lemma 3$(iii)$}
\newtheorem*{lemma*}{Lemma}
\theoremstyle{definition}
\newtheorem{definition}{Definition}
\newcommand{\R}{\mathbb{R}}
\newcommand{\eps}{\varepsilon}
\DeclareMathOperator{\intr}{Int}
\DeclareMathOperator{\isom}{Isom}
\title{Two Sufficient Conditions for a Polyhedron to be (Locally) Rupert}
\author{Evan J. Scott\thanks{Department of Mathematics, Western Washington Univertisy; scotte31@wwu.edu. Adapted from the author's Senior thesis, which was advised by Jeffrey Meier.}}
\begin{document}
\begin{titlingpage}
\setlength{\droptitle}{30pt} % lower the title
\maketitle
\begin{abstract}Given two cubes of equal size, it is possible --- against all odds --- to bore a hole through one which is large enough to pass the other straight through. This preposterous property of the cube was first noted by Prince Rupert of the Rhine\footnote{Count Palatine of the Rhine and Duke of Bavaria, son of Frederick V, the Winter King, Elector Palatine, and
King of Bohemia, and Elizabeth, daughter of James I of England.} in the 17th century. Surprisingly, the cube is not alone --- many other polyhedra have this property, which we call being \textit{Rupert}. A concise way to express that a polyhedron is Rupert is to find two orientations $Q$ and $Q'$ of that polyhedron so that $\pi(Q)$ fits inside $\pi(Q')$, with $\pi$ representing the orthogonal projection onto the $xy$-plane. Given this scheme, to bore the hole in $Q'$ we can remove $\pi^{-1}(\pi(Q))$.

In recent work on this subject, \cite{Jerrard2017}, the authors made the conjecture\footnote{``With a certain hesitancy''}, still open, that every convex polyhedron is Rupert. Aiming at this conjecture, we give two sufficient conditions for a polyhedron to be Rupert. Both conditions require the polyhedron to have a particularly simple orientation $Q$, which we alter by a very small amount to get $Q'$ as required above. When a passage is given by a very small alteration like this, we call it a \textit{local} passage. Restricting to the local case turns out to offer many valuable simplifications. In the process of proving our main theorems, we develop a theory of these local passages, involving an analysis of how small rotations act on simple polyhedra.  \end{abstract}
\end{titlingpage}

\section*{Introduction}

In the middle of the 17th century, Prince Rupert of the Rhine made a bet with a friend that he could achieve the following seemingly impossible feat: Given two cubes of equal size, he claimed to be able to bore a hole through one large enough to pass the other through. His friend, reasonably enough, took Rupert up on this bet. His friend lost. 

In \cite{Jerrard2017}, the authors conjectured that \textit{every} polyhedron has such a passage, that is, that every polyhedron is \textit{Rupert}. The approach to this conjecture thus far has been highly geometric, with focused attention on the details of interesting cases, such as the Platonic and Archimedean solids. We take a different approach, discarding those details to give a more general result.

One way to see that a polyhedron is Rupert is by using the projection $\pi$ onto the $xy$-plane and finding two orientations $Q$ and $Q'$ of the polyhedron so that $\pi(Q)\subseteq\intr(\pi(Q'))$. If this occurs, then we can cut a hole in $Q'$ by removing $\pi^{-1}(\pi(Q))$, which will be large enough to pass $Q$ through by construction. See Figure~\ref{fig:Shadows Prove Cube Rupert} for such a scheme for the Cube: the Cube has two well-known projections, one a square and one a hexagon. The figure shows that the square fits within the hexagon, and hence the Cube is Rupert.

\begin{figure}[h]
\centering
\includegraphics[width = 0.5\linewidth]{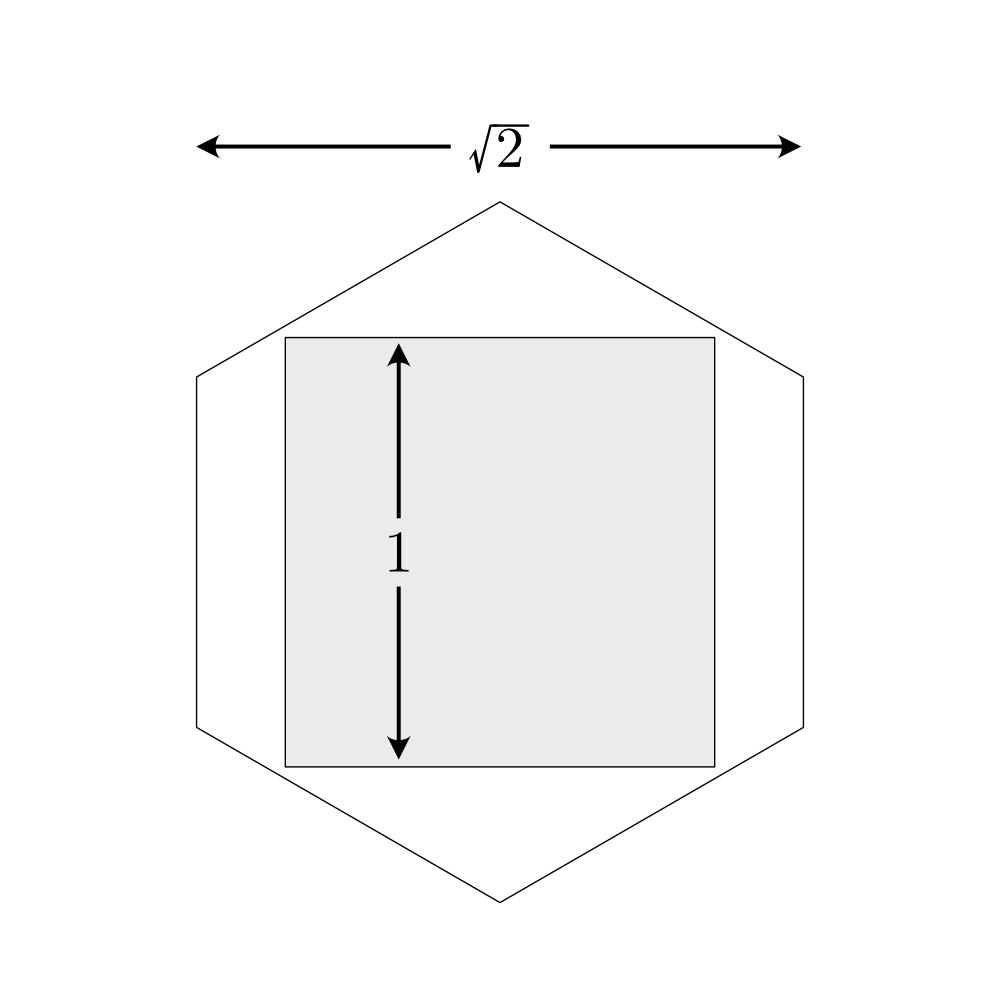}
\caption{Two orthogonal projections of the Cube, a square and a hexagon.}
\label{fig:Shadows Prove Cube Rupert}
\end{figure}

Our main results are two sufficient conditions for a polyhedron to be Rupert, Theorems A and B.

	\begin{theorem*}[A]\label{thm:mainA}
	If $Q$ has a nontrivial double-arch polygonal section $P$, then $Q$ is locally Rupert and hence Rupert.\end{theorem*}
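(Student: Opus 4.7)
The plan is to construct an explicit small rigid motion of $Q$ witnessing local Rupertness, using the given section $P$ as the geometric guide. First I would orient $Q$ so that the plane of $P$ is the $xy$-plane, and write $\partial P = \alpha \cup \beta$ for the two arches of the double-arch structure, meeting at two vertices $v_1$ and $v_2$. Let $\ell$ be the chord through $v_1$ and $v_2$. In this orientation $P \subseteq \pi(Q)$, and I expect that the definition of a nontrivial double-arch polygonal section either forces $\pi(Q) = P$ outright or lets one reduce to that case, so that $P$ is simultaneously the section and the silhouette.

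Next I would consider the one-parameter family $Q'_\theta := R_\theta(Q)$, where $R_\theta$ is rotation about $\ell$ by angle $\theta$, and analyze $\pi(Q'_\theta)$ for small $\theta > 0$. The rotated section $R_\theta(P)$ lies in $Q'_\theta$ and projects to a foreshortened copy of $P$, so that by itself does not help; the gain comes from material of $Q$ on either side of the plane of $P$. To first order in $\theta$, material above contributes a piece of the new silhouette extending past $\alpha$ on one side of $\ell$, while material below contributes a piece extending past $\beta$ on the other side. Consequently $\pi(Q'_\theta)$ is a shifted-and-expanded copy of $P$ with shift and expansion each of order $\theta$. A compensating translation $w_\theta$ of order $\theta$ perpendicular to $\ell$ then recenters the silhouette so that $\partial P\setminus\{v_1,v_2\} \subseteq \intr(\pi(Q'_\theta) + w_\theta)$, since on each arch the first-order expansion dominates.

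The hard part is the two pivot vertices $v_1$ and $v_2$, which $R_\theta$ fixes and where the first-order analysis on the arches is mute. This is where the \emph{nontrivial} hypothesis must enter: I would interpret it as asserting that $\alpha$ and $\beta$ meet at strictly convex angles at each $v_i$, so that the tangent cone to $P$ at $v_i$ is two-dimensional. Under that condition, some small translation perpendicular to $\ell$ pushes both $v_1$ and $v_2$ strictly into $\intr(P)$, hence into $\intr(\pi(Q'_\theta) + w_\theta)$ once the arch expansion is in place. Calibrating the relative magnitudes of $\theta$ and $w_\theta$ so that $w_\theta$ clears the pivots without undoing the first-order expansion on $\alpha$ and $\beta$ is the delicate balancing step; I expect the local-passage theory developed earlier in the paper, in particular its description of how small rotations act on projections of simple polyhedra, to furnish the estimates that make this calibration possible.
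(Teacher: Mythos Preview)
Your argument has the direction of containment backwards, and this is not a cosmetic issue: it reflects a misreading of what a polygonal section is. Condition~2 in the definition says $\pi^{-1}(\partial P)\cap Q=\partial P$, which forces every vertex of $Q$ not lying on $P$ to project strictly into $\intr(P)$. Consequently, when you rotate $Q$ by a small angle $\theta$ about $\ell$, those off-plane vertices, being at positive distance from $\partial P$ in projection, move by $O(\theta)$ and therefore \emph{remain} in $\intr(P)$; they contribute nothing to the boundary of the new silhouette. Meanwhile the vertices of $P$ itself move from $(x,y,0)$ to $(x,y\cos\theta,y\sin\theta)$, so their projections contract toward $\ell$. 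The net effect is that $\pi(R_\theta(Q))\subseteq P$ with equality only at $v_1,v_2$: the shadow \emph{shrinks}, it does not expand. The expansion mechanism you describe---material above the plane pushing the silhouette outward---is exactly the phenomenon exploited in Theorem~B for \emph{prism} sections, where the off-plane material does sit directly over $\partial P$; you have transplanted that intuition into the wrong setting.

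Even after correcting the direction, your proposed fix for the pivot vertices does not match the paper's and is unlikely to succeed as stated. The paper's notion of ``locally Rupert'' is defined purely in terms of rotations in $SO(3)$ about the origin, so translations are not available. What the paper actually does is perturb the \emph{axis} of rotation: rotation about the axis through the origin in the direction of $v_1$ already sends every arch vertex into $\intr(P)$, and then the $J_v$ machinery (in particular Lemma~3.6 and the symmetry $\phi$ swapping $v_1\leftrightarrow v_2$) is used to show that the allowable axis sets $A_{v_1}^\delta$ and $A_{v_2}^\delta$ coincide on a curve $F$ passing through $v_1$, so a nearby axis handles all vertices simultaneously. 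Finally, your reading of ``nontrivial'' is off: it means each arch has at least one vertex strictly between the perpendiculars through $v_1$ and $v_2$, which is what guarantees that the chord $\ell$ lies in $\intr(P)$ away from its endpoints---not a condition on the interior angles at $v_1,v_2$.
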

	
	\begin{theorem*}[B]\label{thm:mainB}
	If $Q$ has a prism section $R$ over a polygon $P$ and $P$ is nontrivial double-arch, then $Q$ is locally reverse Rupert and hence Rupert.\end{theorem*}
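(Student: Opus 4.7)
The plan is to mirror the approach of Theorem~A but to use the prism $R$ to \emph{grow} the outer shadow rather than shrink the inner one; this is what ``reverse'' refers to. I would orient $Q$ in its canonical position with the axis of the prism $R$ vertical, so that the base $P$ lies in the $xy$-plane and $\pi(R) = P \subseteq \pi(Q)$. The goal is then to construct a small perturbation $Q'$ of this orientation so that $\pi(Q) \subset \intr(\pi(Q'))$, which will exhibit the local (reverse) passage.

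The key mechanism is that tilting $Q'$ by a small angle $\theta$ about a horizontal axis smears the prism's shadow sideways: each horizontal slice of $R$ at height $z$ projects to a translate of $P$ by roughly $z\theta$, so to first order $\pi(R'_\theta)$ is the Minkowski sum of $P$ with a length-$h\theta$ segment in the tilt direction. Hence $\pi(R'_\theta)$ strictly contains $P$ on one side. Applying the double-arch structure of $P$ in the same way it powers Theorem~A --- but now tracking the \emph{outward} rather than the inward motion of the boundary --- one should be able to combine this tilt with a small in-plane rotation to produce a uniform outward margin of order $\theta$ along the entire boundary of $P$.

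The main obstacle is controlling the parts of $Q$ lying outside the prism. Writing $\pi(Q) = P \cup S$, where $S$ is the contribution from $Q \setminus R$, after the perturbation one has $\pi(Q') = \pi(R'_\theta) \cup S'$ with $S'$ close to $S$ by continuity. The mechanism above yields $P \subset \intr(\pi(R'_\theta))$, and one still needs $S \subset \intr(S')$. I would handle this by invoking the local rotation framework for simple polyhedra developed earlier in the paper: on the interior of $S \setminus \partial P$, continuity alone suffices for $\theta$ small, while on boundary pieces of $S$ adjacent to $\partial P$ the prism-induced outward margin absorbs any slack. The delicate case --- and the most likely source of technical difficulty --- is when $\partial S$ is tangent to $\partial P$: there, the direction of the double-arch rotation must be chosen to ensure a first-order outward motion of $\partial S$, which is precisely where the full strength of the double-arch hypothesis on $P$ has to be deployed.
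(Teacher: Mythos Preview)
Your ``main obstacle'' is a phantom. By the definition of a prism section, every vertex of $Q$ lies in $\pi^{-1}(P)$ (otherwise the convex hull of that vertex with $R$ would meet the slab $L_h$ in more than $R$), so $\pi(Q)=\pi(R)=P$ exactly. There is no extra piece $S$ to control. Once you know this, the passage from $R$ to $Q$ is a one-line containment argument, not a delicate boundary analysis: if $\sigma$ is reverse Rupert for $R$, then $\pi(Q)=P\subseteq\intr(\pi(\sigma(R)))\subseteq\intr(\pi(\sigma(Q)))$, the last inclusion simply because $R\subseteq Q$. The vertices of $Q$ outside the prism can only \emph{enlarge} the rotated shadow, so they help rather than hurt. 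Your discussion of tangencies between $\partial S$ and $\partial P$ is therefore addressing a difficulty that does not exist.

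Your mechanism for handling $R$ itself is also different from, and vaguer than, the paper's. A pure tilt gives a Minkowski sum of $P$ with a segment, which does \emph{not} contain $P$ in its interior (the sides parallel to the tilt direction do not move outward), so everything rests on the unspecified ``combine with a small in-plane rotation'' step. The paper avoids this by reusing Theorem~A directly: take a locally Rupert rotation $\rho$ for the flat polygon $P$; near each vertex $v$ the sphere $S_v$ is tangent to the vertical edge of the prism, so for $|\rho|$ small enough $\rho\cdot v$ lands in $\intr(R)$, whence $\rho(P)\subseteq\intr(R)$; then $\sigma=\rho^{-1}$ gives $P\subseteq\intr(\pi(\sigma(R)))$. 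This is cleaner because it converts the already-proved \emph{forward} Rupert rotation for $P$ into a reverse one for $R$ by inversion, rather than building a new perturbation from scratch.
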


We'll now give an informal description of these results. For the relevant specific definitions, see Sections 1, 2, and 4. Theorem~\nameref{thm:mainA} and Theorem~\nameref{thm:mainB} give that if a polyhedron $Q$ can be built from a sufficiently nice polygon $P$ by adding vertices above and below $P$ in one of two ways, both ensuring that $\pi(Q) = \pi(P)$, then $Q$ has a Rupert passage. This passage is given by taking a very small rotation $\rho$ so that $\pi(\rho(Q)) \subset \intr(\pi(Q))$ or $\pi(Q) \subset \intr(\pi(\rho(Q)))$ for Theorem~\nameref{thm:mainA} and Theorem~\nameref{thm:mainB}, respectively. These two ``ways'' for building $Q$ from $P$ are demonstrated by the Octahedron and the Cube, see Figure~\ref{fig:Platonic Sections}. 

\begin{figure}[h!]
\centering
	\begin{subfigure}{.5\textwidth}
	\centering
	\includegraphics[height = .75\linewidth]{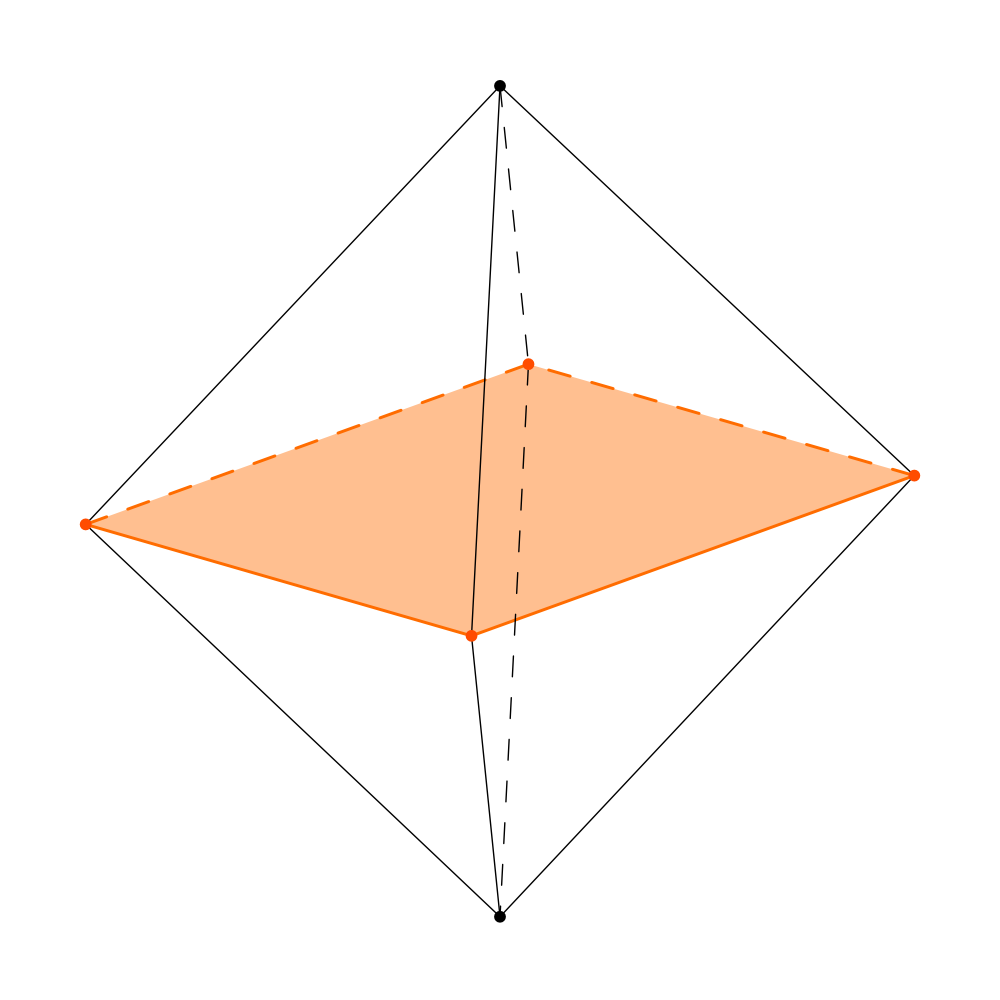}
	\caption{}
	\label{fig:Octahedron Polygonal Section}
	\end{subfigure}%
	\begin{subfigure}{.5\textwidth}
	\centering
	\includegraphics[height = .75\linewidth]{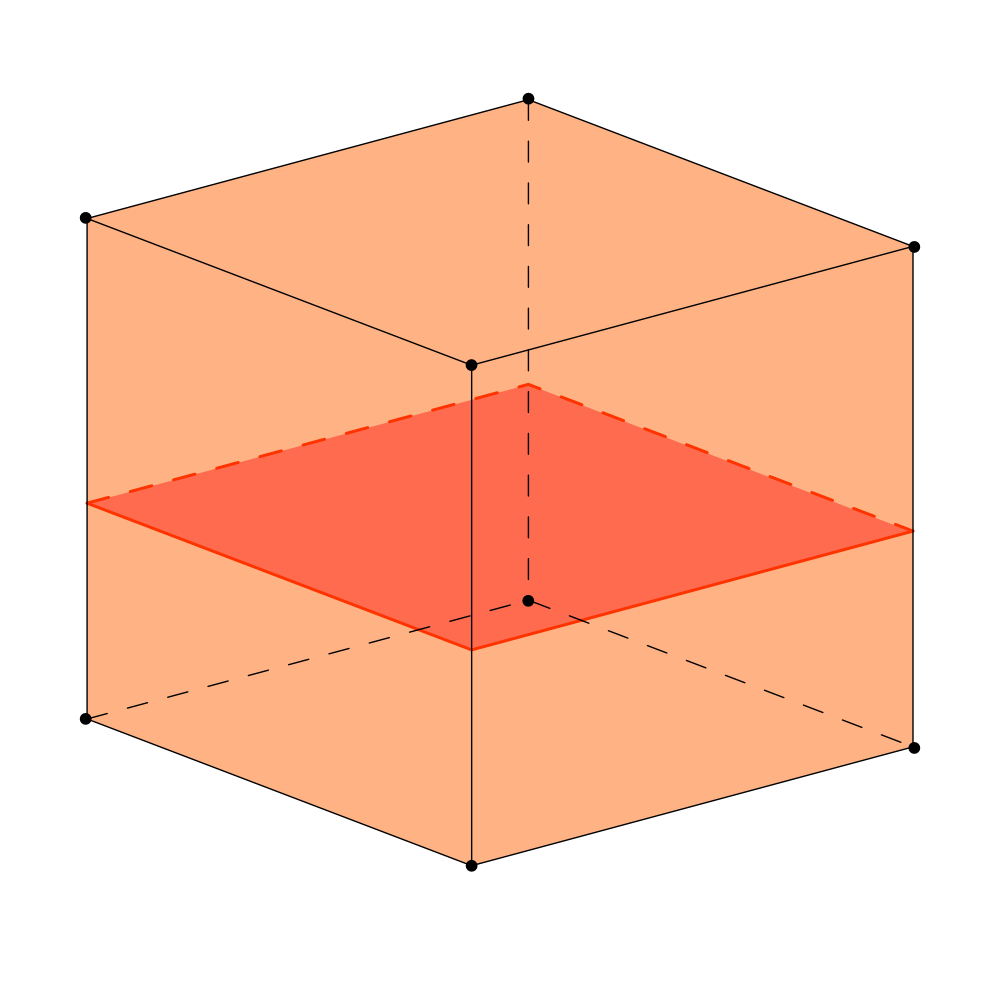}
	\caption{}
	\label{fig:Cube Prism Section}
	\end{subfigure}
\caption{The Octahedron and Cube are both built over squares.}	
\label{fig:Platonic Sections}
\end{figure}

Both are built from a square: the Octahedron can be built by adding a vertex above and a vertex below the interior of the square, and the Cube can be built by extending the square vertically. In this latter case, we can then add vertices anywhere above and below the square, but this isn't necessary for the Cube. In this paper we show that both polyhedra, and many more, are Rupert.

This paper is constructed as follows: in Section~2 we show that by limiting to the case of an arbitrarily small rotation $\rho$, we can restrict our attention only to the simple case of a polyhedron whose vertices all lie in the $xy$-plane. In Section~3 we develop the theory of how small rotations $\rho$ act on these ``flat'' polyhedra, and in Section~4 we apply this theory to prove our main results. In Section~5 we survey which polyhedra we can prove to be Rupert with our main theorems. Finally, in Section~6 we offer a few directions for future work.

\section*{Acknowledgements}

I would primarily like to thank my advisor, Jeffrey Meier, for his wide range of help and support on this paper. I'd also like to thank Zeb Howell for making the figures and Hart Easley for an extremely helpful early proofread. On a personal note, I'd like to thank my family for supporting my pursuit of mathematics, as well as my professor, Anne Hafer, for encouraging me to study this subject to begin with. This paper would not exist without all of these people and more.

\section{Preliminaries}

All polyhedra we consider in this paper are convex. An \textit{oriented polyhedron} $Q$ is the convex hull of a finite set of vertices $V\subseteq \R^3$. An oriented polyhedron has combinatorial data, its vertices, faces, and edges, all given in the usual way. Any convex body in $\R^3$ is the convex hull of its \textit{extreme points}, which do not lie on any straight line connecting two other points. We can thus assume, without loss of generality, that $V$ consists only of extreme points; in other words, each $q\in V$ is a vertex of $Q$ in the combinatorial sense. 

Two oriented polyhedra $Q$ and $Q'$ are \textit{congruent} if there exists a rigid motion of $\R^3$ taking $Q$ to $Q'$. We call an equivalence class under this congruence an \textit{unoriented polyhedron}, and we call a representative of a congruence class $\mathcal Q$ an \textit{orientation of $\mathcal Q$}. In this paper, when the word ``polyhedron'' is used alone, it should be taken to mean ``oriented polyhedron.'' This is nonstandard, but helps to simplify later discussion.

Let $\pi\colon \R^3\to \R^3$ be the orthogonal projection onto the $xy$-plane, $\{(x, y, 0)\in \R^3\}$. An unoriented polyhedron $\mathcal Q$ is \textit{Rupert} if there exists two orientations $Q, Q'$ of $\mathcal Q$ so that $\pi(Q)\subseteq \intr(\pi(Q'))$. If this occurs, we can ``drill a hole straight through'' $Q'$ by taking $Q'\setminus\pi^{-1}(\pi(Q))$. This hole is honestly drilled through the ``body'' of $Q'$, since $\pi(Q)$ is contained in the interior of $\pi(Q')$; furthermore, by construction, we can slide $Q$ through this hole by translating it vertically. Such a scheme is called a \textit{Rupert passage} for $\mathcal Q$, often shortened simply to ``passage.''

The \textit{special orthogonal group $SO(3)$} is the group of orthogonal matrices with determinant $1$ in $GL_3(\R)$. The group $SO(3)$ inherits its smooth action on $\R^3$ from the general linear group, and this action shows that $SO(3)$ is the group of rotations of $\R^3$ fixing the origin. Euler's Rotation Theorem states that every element of $SO(3)$ is a rotation by some angle around some axis, which allows us to represent an element of $SO(3)$ as a unit vector $a$, our axis, and $\theta\in S^1$, our angle. The rotation of $\theta$ around $a$ by the right-hand rule is written $\rho_a^\theta$. 

The unit sphere in $\R^3$ will be written $S$, so that a rotation $\rho_a^\theta$ has $a\in S$. We assume that our spheres have a metric and angle measures given in the usual way. A \textit{great circle} is the intersection of a planar subspace of $\R^3$ with our sphere, and a \textit{(spherical) straight line} is a segment of a great circle. We will appeal to spherical trigonometry later, but no familiarity with the subject is assumed.

The group $SO(3)$ is a 3-dimensional Lie group, so near the identity we inherit the metric space structure from $\R^3$. For rotations $\rho_a^\theta\in SO(3)$ with $|\theta|<\pi$, we define $|\rho_a^\theta| \coloneqq |\theta|$. The value $|\rho_a^\theta|$ coincides with the distance from $\rho_a^\theta$ to the identity under the metric inherited from $\R^3$. 

A rotation $\rho\in SO(3)$ is a \textit{Rupert rotation} for an oriented polyhedron $Q$ if $\pi(\rho(Q))\subseteq \intr(\pi(Q))$. Clearly, if an unoriented polyhedron $\mathcal Q$ has an orientation $Q$ with a Rupert rotation, then $\mathcal Q$ is Rupert. This scheme for finding Rupert passages does not cover all possible passages, but it is structured enough to allow for the development of some theory.

\section{Locally Rupert and Polygonal Sections}

When working with any Lie group action, such that of $SO(3)$ on $\R^3$, there is often much to be gained by analyzing the actions of group elements near the identity. In our situation, this manifests as Rupert rotations with very small angular components, rotations which will change the polyhedron very little. This motivates the following definition.

\begin{definition}[Locally Rupert]
An oriented polyhedron $Q$ is \textit{locally Rupert} if, for all $\eps>0$, there exists a Rupert rotation $\rho$ for $Q$ with $|\rho|< \eps$. An unoriented polyhedron is locally Rupert if it has a locally Rupert orientation.
\end{definition}

Clearly, if a polyhedron is locally Rupert then it is Rupert. We take the local approach because it offers some key simplifications, including the following. 

\begin{definition}[Flat Polygon]
A \textit{flat polygon} $P$ is a polyhedron formed as the convex hull of vertices $V\in R^3$, where each vertex $v\in V$ lies in the $xy$-plane. More simply, a flat polygon is a convex polygon embedded into the $xy$-plane in $\R^3$. We will use $\partial P$ for a flat polygon $P$ to mean its topological boundary as a subset of the plane, rather than as a subset of $\R^3$.
\end{definition}

For this next definition, see Figure~\ref{fig:Polygonal Section Definition}.

\begin{figure}[h]
\centering
\includegraphics[width = 0.5\linewidth]{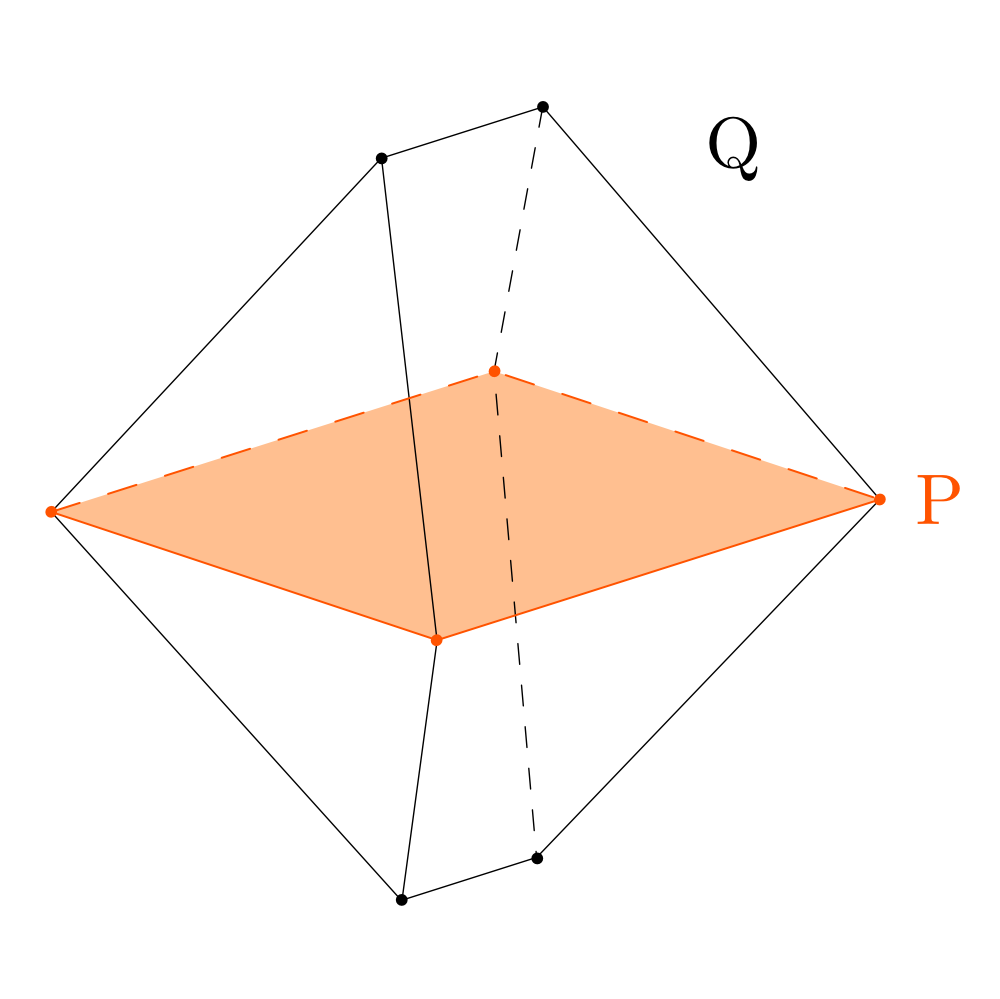}
\caption{An example of a square polygonal section.}
\label{fig:Polygonal Section Definition}
\end{figure}

\begin{definition}[Polygonal Section]
A \textit{polygonal section} of a polyhedron $Q$ is a flat polygon $P$ so that 
\begin{enumerate} 
\item $P$ is the intersection of $Q$ and the $xy$-plane, and
\item $\pi^{-1}(\partial P) \cap Q = \partial P$.
\end{enumerate}
\end{definition}

As a first observation, we can deduce that all vertices of $Q$ other than the vertices of $P$ are in $\intr(\pi^{-1}(P))$. See Figure~\ref{fig:Projection Equality of Polygonal Sections} for the following discussion. The vertices of $Q$ lying in the $xy$-plane must be in $P$ by $1$, and if a vertex $q$ of $Q$ lies outside of the $xy$-plane and $\intr(\pi^{-1}(P))$, the convex hull of that vertex and the vertices of $P$ would intersect $\pi^{-1}(\partial P)$ outside of $\partial P$, contradicting 2. From this we can conclude that $\pi(Q) = \pi(P) = P$. 

\begin{figure}[h]
\centering
\includegraphics[width = 0.5\linewidth]{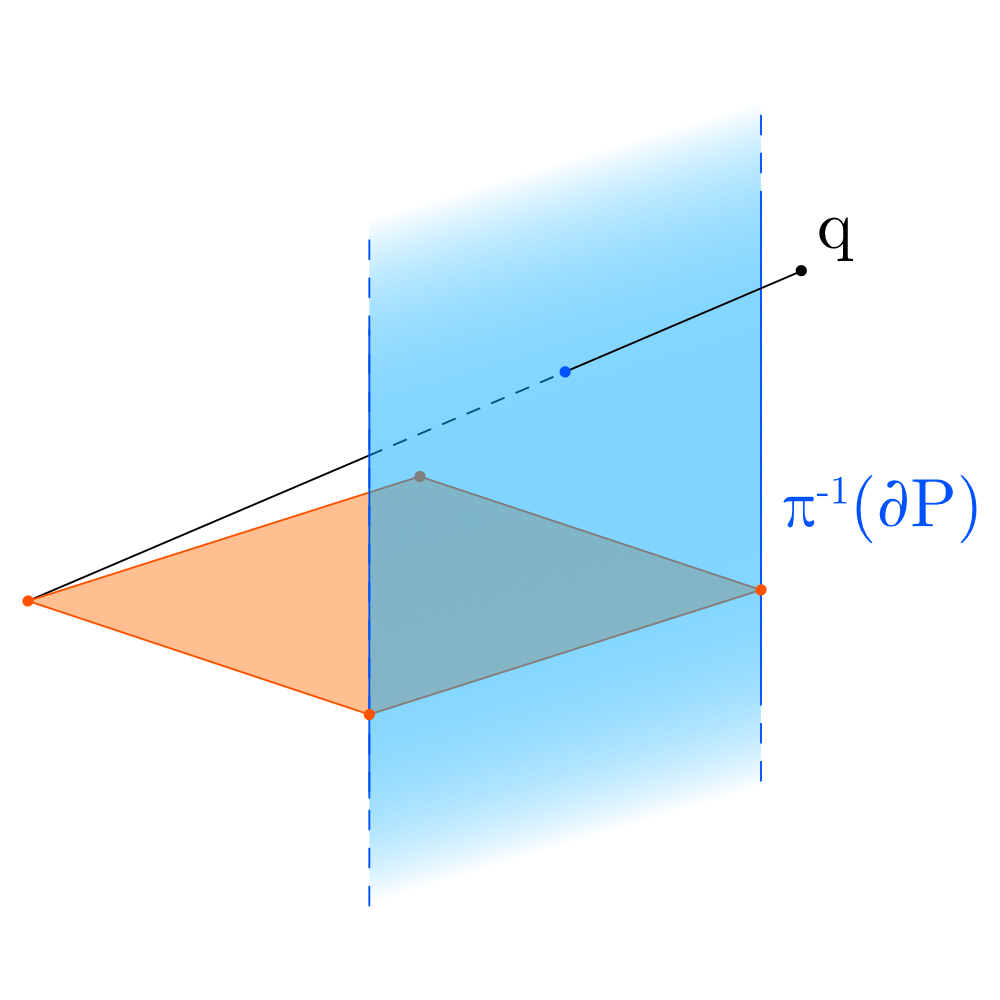}
\caption{A vertex $q$ of $Q$ which is outside $\pi^{-1}(P)$ contradicts 2.}
\label{fig:Projection Equality of Polygonal Sections}
\end{figure}

%%We can also deduce something else: no edge or vertex of $P$ lies on the interior of a face of $Q$. For any point $x$ on the interior of a face of $Q$, by convexity $\pi^{-1}(\pi(x))$ intersects $Q$ in a vertical line segment. Thus, since the edges and vertices of $P$ are part of its boundary $\partial P$, any point $x$ on an edge or vertex of $P$ and the interior of a face of $Q$ would intersect $Q$ outside of $\partial P$ and thus give a contradiction by 2. This also gives that the vertices of $P$ are all vertices of $Q$: if a vertex $v$ of $P$ doesn't lie on a vertex of $Q$, then by the above it must lie interior to an edge of $Q$, say $e$. Each vertex of $P$ has two connecting edges on $P$, and since $P\subseteq Q$ these edges are two lines on $Q$. Since $v$ lies on the interior of $e$ but does not lie interior to an edge of $P$, one of these two connecting edges must not be $e$. That edge must then lie in a face of $Q$, a contradiction.%%

Not every polyhedron $Q$ has a polygonal section; for example, the Cube does not. When $Q$ does have a polygonal section, say $P$, there is a close relationship between the locally Rupert property of $P$ and that of $Q$. For each vertex $v$ of $Q$ that doesn't lie on $P$, since $v\in \intr(\pi^{-1}(P))$, $\pi(v)$ lies in the interior of $P = \pi(Q)$. Intuitively, this leaves a little ``wiggle room'' for $v$ (and hence, $\pi(v)$) to move around without affecting the shape of $\pi(Q)$. Thus, for small enough rotations, the only vertices that impact the shape of $\pi(Q)$ are those lying on $P$, so if $P$ is locally Rupert, $Q$ should be as well.

\begin{lemma}[Polygonal Section Bootstrap]\label{lem:psbs}
Let $Q$ be a polyhedron with a polygonal section $P$. If $P$ is locally Rupert, then $Q$ is locally Rupert.
\end{lemma}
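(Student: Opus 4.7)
The plan is to combine the local Rupert rotation for $P$ with a continuity argument to control the vertices of $Q$ that do not lie on $P$. First I would list the vertex set $V$ of $Q$ as the disjoint union of the vertices of $P$ (which lie in the $xy$-plane) and vertices $v_1,\dots,v_k$ lying strictly off the $xy$-plane. By the observation immediately preceding the lemma, each $\pi(v_i)$ lies in $\intr(P)$.

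Now fix $\eps>0$. Since the map $\rho\mapsto \pi(\rho(v_i))$ is continuous in $\rho\in SO(3)$ and equals $\pi(v_i)\in\intr(P)$ at the identity, there is an open neighborhood $U_i$ of the identity on which $\pi(\rho(v_i))\in\intr(P)$. I would choose $\delta\in(0,\eps]$ small enough that the metric $\delta$-ball around the identity is contained in every $U_i$ (possible since $k$ is finite). Invoking the local Rupert hypothesis for $P$, I would then select a rotation $\rho$ with $|\rho|<\delta$ such that $\pi(\rho(P))\subseteq\intr(P)$. In particular, $\pi(\rho(v))\in\intr(P)$ for every vertex $v$ of $P$, and also $\pi(\rho(v_i))\in\intr(P)$ for each $i$ by the choice of $\delta$.

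To finish, I would use linearity of $\rho$ and $\pi$ to write $\pi(\rho(Q))=\conv\{\pi(\rho(v)):v\in V\}$. Every one of these finitely many points lies in the open convex set $\intr(P)=\intr(\pi(Q))$, so their convex hull does as well. Hence $\pi(\rho(Q))\subseteq\intr(\pi(Q))$ with $|\rho|<\eps$, exhibiting $Q$ as locally Rupert.

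I do not expect a serious obstacle here: the content of the lemma really is just that the ``wiggle room'' enjoyed by vertices strictly interior to $\pi^{-1}(P)$ is robust under sufficiently small rotations. The only thing to be careful about is uniformity across the finitely many off-plane vertices, which is handled cleanly by intersecting the neighborhoods $U_i$ before applying local Rupertness of $P$.
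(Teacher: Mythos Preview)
Your proof is correct and follows essentially the same route as the paper's: both split the vertices of $Q$ into those on $P$ and those off it, use continuity of $\rho\mapsto\pi(\rho(v))$ to find a uniform neighborhood of the identity keeping the off-plane vertices' projections in $\intr(P)$, invoke the local Rupert property of $P$ inside that neighborhood, and finish by taking convex hulls. The paper packages the continuity step via the preimages $K_{\pi(h)}=\psi_h^{-1}(\intr(\pi(Q)))$ and their finite intersection $K$, which is exactly your $\bigcap_i U_i$.
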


\begin{proof}
The continuous action of $SO(3)$ on $\mathbb R^{3}$ gives an action function $SO(3)\times \R^3\to \R^3$ given by $(g,x)\mapsto g\cdot x$. Restricting this function to the subspace $SO(3)\times \{a\}$ for some $a\in \R^3$ and composing with $\pi$ gives a continuous function $\psi_a\colon SO(3)\to \R^3$ given by $\psi_a(g) = \pi(g\cdot a)$. The function $\psi_a$, like $\pi$, maps into the $xy$-plane.

Let $H$ be the set of vertices of $Q$ not lying on $P$. For each vertex $h\in H$, $\pi(h)\in \intr(\pi(Q))$, since $h\in \intr(\pi^{-1}(P))$. Let $K_{\pi(h)} = \psi_h^{-1}(\intr(\pi(Q))$. Explicitly, $K_{\pi(h)}$ is the set of rotations $\sigma\in SO(3)$ so that $\pi(\sigma\cdot h)\in \intr(\pi(Q))$. Letting $I$ be the identity in $SO(3)$, we see that $\pi(I\cdot q)\in  \intr(\pi(Q))$ and hence $I\in K_{\pi(h)}$. Let $K = \bigcap_{h\in H} K_{\pi(h)}$. The set $K$ is open since $H$ is finite, and $I\in K$ since $I$ is in all the component sets of the intersection.

Near the identity, $SO(3)$ is a metric space, so since $K$ is open and contains the identity, there is some radius $\eta>0$ so that the open ball of radius $\eta$ centered at the identity in $SO(3)$ is contained within $K$. Let $\eps>0$ be given. We wish to find a Rupert rotation $\rho$ for $Q$ so that $|\rho|<\eps$. Let $\delta = \min\{\eps, \eta\}$. Since $P$ is locally Rupert, there exists a Rupert rotation $\rho$ for $P$ with $|\rho|<\delta$, and hence $|\rho|<\eps$ and $|\rho|<\eta$. We claim that $\rho$ is a Rupert rotation for $Q$.

To show that $\rho$ is a Rupert rotation for $Q$, by convexity it suffices to show that for each vertex $v$ of $Q$, $\pi(\rho\cdot v)\in \intr(\pi(Q))$. There are two cases to handle - either $v$ is a vertex of $P$ or $v\in H$. If $v$ is a vertex of $P$, since $\rho$ is a Rupert rotation for $P$ we see that $\pi(\rho\cdot v)\in \intr(\pi(P))$, but since $\pi(P) = \pi(Q)$ this case is done. 

If $v\in H$, then since $|\rho|<\eta$, $\rho\in K$ and hence $\rho\in K_{\pi(v)}$. By definition then, $\pi(\rho\cdot v)\in \intr(\pi(Q))$, and we are done.

Thus, since $\pi(\rho\cdot v)\in \intr(\pi(Q))$ for all vertices $v$ of $Q$, by taking the convex hull on the left we get $\pi(\rho(Q))$, and since $\intr(\pi(Q))$ is convex, thus $\pi(\rho(Q))\subseteq\intr(\pi(Q))$ and $\rho$ is a Rupert rotation for $Q$ with $|\rho|<\eps$, as desired.
\end{proof}

Note that the converse is fairly simple, following from the facts that $P\subseteq Q$ and $\pi(Q) = \pi(P)$. This lemma tells us that, in the local case, finding Rupert rotations for polyhedra $Q$ with a polygonal section $P$ restricts to finding Rupert rotations for $P$.

\section{Allowable Sets and the $J$ Function}

We now aim to understand when a flat polygon $P$ is locally Rupert. For the following discussion, we fix a small angle $\delta>0$. Recall that $S$ is the unit sphere in $\R^3$, where our axes in the axis-angle representation of rotations live.

\begin{definition}[Allowable Axis Set $A_v^\delta$]
Let $P$ be a flat polygon, and let $v$ be a vertex of $P$. The \textit{allowable axis set} for $v$ with rotation amount $\delta$, $A_v^\delta$, is \[\{a\in S \mid \pi(\rho_a^\delta(v))\in\intr(\pi(P))\}\] Informally, this is the set of ``good'' axes for $v$ with rotation amount $\delta$.
\end{definition}

A flat polygon $P$ is locally Rupert if and only if, for all $\eps>0$, there exists a $0<\delta<\eps$ so that $\bigcap_{v\in P} A_v^\delta\not=\varnothing$: an axis $a\in \bigcap_{v\in P} A_v^\delta$ has the property that $\pi(\rho_a^\delta(v))\in \intr(\pi(P))$ for all vertices $v$ of $P$. By taking the convex hull of $\pi(\rho_a^\delta(v))$ across all vertices $v$, we get $\pi(\rho_a^\delta(P))\subseteq \intr(\pi(P))$, so we see that $\rho_a^\delta$ is a Rupert rotation by definition. Since $|\rho_a^\delta| = \delta$, and $\delta<\eps$, this shows that $P$ is locally Rupert. The other direction is trivial.

In order to study this problem, we give an explicit calculation of the shape of the allowable sets $A_v^\delta$. We will restrict our attention to vertices that lie on the $xy$-plane, since that is the case for flat polygons.\footnote{Note that much of the following development can be easily adapted to the case of an arbitrary vertex, which might offer a path forward in other cases than that of the flat polygon.}

Let $P$ be a flat polygon, let $v\in P$, and choose some small $\delta$. Let $S_v$ be the sphere in $\R^3$ centered at the origin which passes through $v$, so $v\in S_v$. In $\R^3$, we will say that the positive part of the $z$ axis is ``up,'' and the negative part is ``down.'' We will sometimes refer to the intersection of the positive $z$-axis and $S_v$ as the \textit{positive  pole} of $S_v$.

Let $T_v$ be the subset of $S_v$ given by $S_v\cap \pi^{-1}(\intr(P))$. The set $T_v$ is open on $S_v$ in the subspace topology, since it is the intersection of an open set and our subspace. See Figure~\ref{fig:Tv}. By definition, an axis $a\in S$ is an allowable axis for $v$ if and only if $\rho_a^\delta(v)\in T_v$.

\begin{figure}[h]
\centering
\includegraphics[width = 0.5\linewidth]{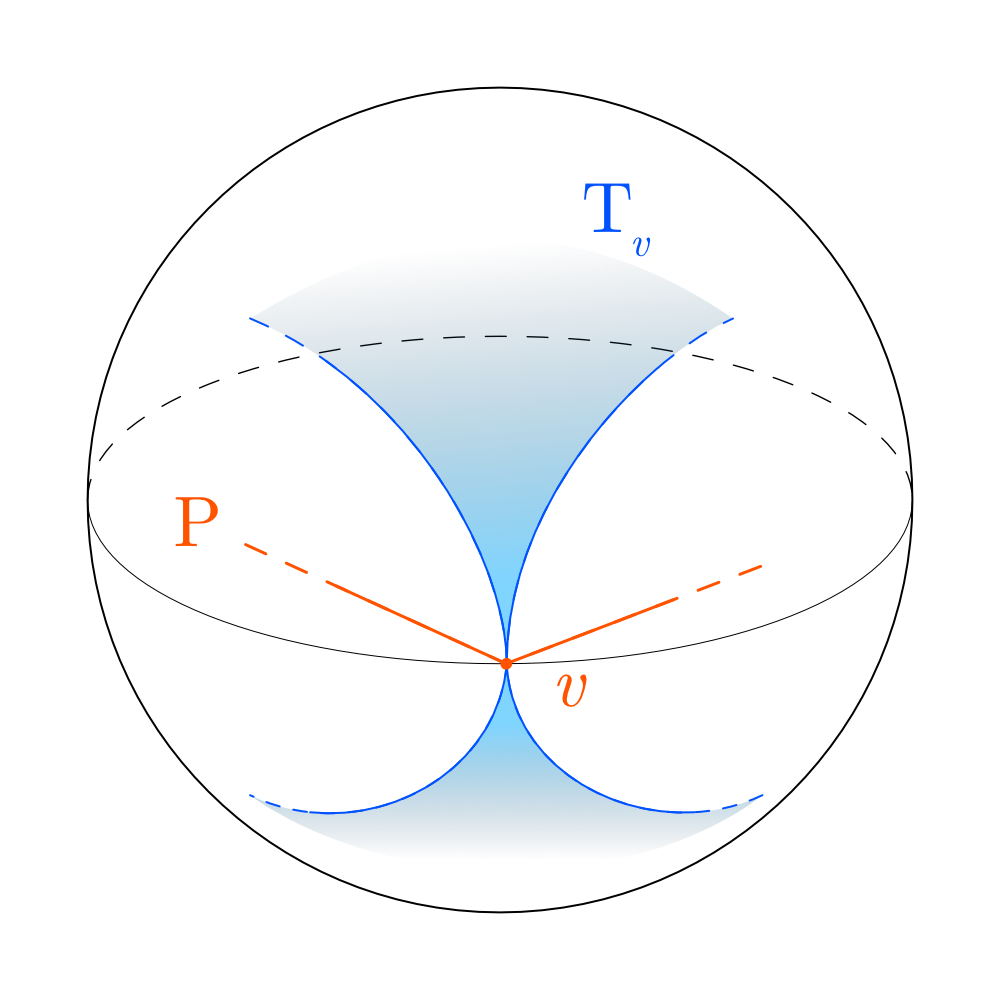}
\caption{An example of a set $T_v = S_v\cap \pi^{-1}(\intr(P))$.}
\label{fig:Tv}
\end{figure}

In order to calculate the shape of $A_v^\delta$, we will construct it using a particular function, and then establish properties of that function. To do this, we now need an important trick: we can naturally identify $S_v$ with the unit sphere $S$ by scaling, since the important information about an axis for rotation is its direction, not its magnitude. This allows us to talk about the allowable set $A_v^\delta$ as a subset of the sphere $S_v$.

\begin{definition}[The map $J_v$]
For a vertex $v\in P$, let $J_v\colon S_v\to S_v$ be defined by mapping $a\in S_v$ to $\rho_a^\delta(v)$.
\end{definition}

\begin{lemma}
$J_v$ is continuous.
\end{lemma}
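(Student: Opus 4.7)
The plan is to write $J_v$ as a composition of maps already known (or easily seen) to be continuous, and invoke continuity of compositions. First I would recall the identification of $S_v$ with the unit sphere $S$ by the scaling $a \mapsto a/\|a\|$, which is a homeomorphism on $S_v$ since $0 \notin S_v$. Under this identification, $J_v$ factors as
\[
S_v \xrightarrow{\ \text{scale}\ } S \xrightarrow{\ \Phi\ } SO(3) \xrightarrow{\ \mathrm{ev}_v\ } \R^3,
\]
where $\Phi(a) = \rho_a^{\delta}$ (with $\delta$ fixed) and $\mathrm{ev}_v(g) = g\cdot v$.

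Next I would check continuity of each factor. The scaling is smooth. For $\mathrm{ev}_v$, continuity is immediate from the continuity of the $SO(3)$ action on $\R^3$ that was already used in the proof of Lemma~\ref{lem:psbs}: restricting the action map $SO(3)\times\R^3\to\R^3$ to $SO(3)\times\{v\}$ yields a continuous map, which is exactly $\mathrm{ev}_v$. For $\Phi$, I would appeal to the Rodrigues rotation formula, which writes the matrix $\rho_a^{\delta}$ with entries that are polynomials in the coordinates of $a\in S$ and in $\cos\delta,\sin\delta$. Since $\delta$ is fixed, the entries depend continuously (in fact polynomially) on $a$, so $\Phi$ is continuous as a map into $SO(3)\subseteq GL_3(\R)$ with its subspace topology.

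Finally, the composition of these three continuous maps lands in $S_v \subseteq \R^3$ (because rotations preserve distance to the origin, and $v \in S_v$), so $J_v\colon S_v\to S_v$ is continuous.

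I do not expect a serious obstacle here: the only subtlety worth flagging is that continuity of $a \mapsto \rho_a^{\delta}$ would fail in a neighborhood of $\delta=0$ if one varied $\delta$ as well (different axes give the same identity rotation), but since $\delta$ is held fixed and strictly positive, Rodrigues' formula is unambiguous and the argument is clean.
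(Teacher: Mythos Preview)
Your proof is correct and takes essentially the same approach as the paper: both factor $J_v$ through the map $a\mapsto\rho_a^\delta$ followed by the action/evaluation map, and both cite continuity of the $SO(3)$-action from Lemma~\ref{lem:psbs}. The only cosmetic difference is that the paper justifies continuity of $a\mapsto\rho_a^\delta$ by invoking the axis--angle product decomposition $S\times(0,\alpha)$ of a neighborhood in $SO(3)$ and then restricting the action map to the slice $S_v\times\{\delta\}\times\{v\}$, whereas you appeal directly to Rodrigues' formula; either argument does the job.
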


\begin{proof}
This proof is similar to the construction of $\psi_a$ in Lemma~\ref{lem:psbs}. The axis-angle representation of $SO(3)$ gives that, for nonzero rotations which have a small angle of rotation (say less than some $0<\alpha<\pi$), $SO(3)$ admits a decomposition as $S\times (0, \alpha)$, where $(0, \alpha)$ is the open interval, and the point $(a, \theta)$ in the product corresponds to $\rho_a^\theta\in SO(3)$. Since $S$ is equivalent to $S_v$ by scaling, we get that $SO(3)$ decomposes as $S_v\times (0, \alpha)$.

The action of $SO(3)$ on $S_v$ induces a continuous function $SO(3)\times S_v \to S_v$. Considering $SO(3)$ to be decomposed in the above way, $J_v$ can be constructed by restricting the action function to the subspace $\left(S_v \times (0, \alpha)\right) \times S_v\to S_v$, then further to the subspace $S_v\times \{\delta\}\times \{v\}$, giving $J_v\colon S_v\to S_v$ defined by $J_v(a) = \rho_a^\delta(v)$. This is continuous as it is the restriction of a continuous function to a subspace.
\end{proof}

Let $a\in S_v$. If $J_v(a)\in T_v$, then $\rho_a^\delta(v)\in T_v$ by definition, so this is equivalent to saying $a\in A_v^\delta$. Thus, $A_v^\delta = J_v^{-1}(T_v)$. Since $T_v$ is open, this shows that the allowable sets are open. We would like to know more about $A_v^\delta$, though, so how do we access this information via $J_v$? 

\subsection{Fibering the Sphere}

The surface of the earth has a natural decomposition as a ``product'': each point $p$ has a latitude and a longitude. The latitude is the spherical distance from $p$ to the north pole. The longitude is more complex: start with the $0^\circ$ line, defined to be the (spherical) straight line from the north pole to the south pole passing through Greenwitch, England. This line is called the prime meridian. Now take the circle of points $C$ which have the same latitude as $p$, and measure the angle around $C$ from the intersection of $C$ with the prime meridian to $p$. This is the longitude of $p$. This construction allows us to uniquely represent any point on the Earth, besides the poles, as $(lat, long)$. 

On $S_v$, we can get a similar decomposition by letting $v$ act as the ``north pole.'' See Figure~\ref{fig:Meridian}. To make this work we need to choose a $0^\circ$ or meridian line from $v$ to its antipode. The vertex $v$ lies on the equator of $S_v$, which serves as a natural choice for a meridian line. When looking down on the sphere $S_v$ from above, let the meridian line be the part of the equator which is counterclockwise away from $v$.\footnote{That is, positively oriented relative to the positive pole of $S_v$.} Call this line $M_v$. 

\begin{figure}[h]
\centering
\includegraphics[width = 0.5\linewidth]{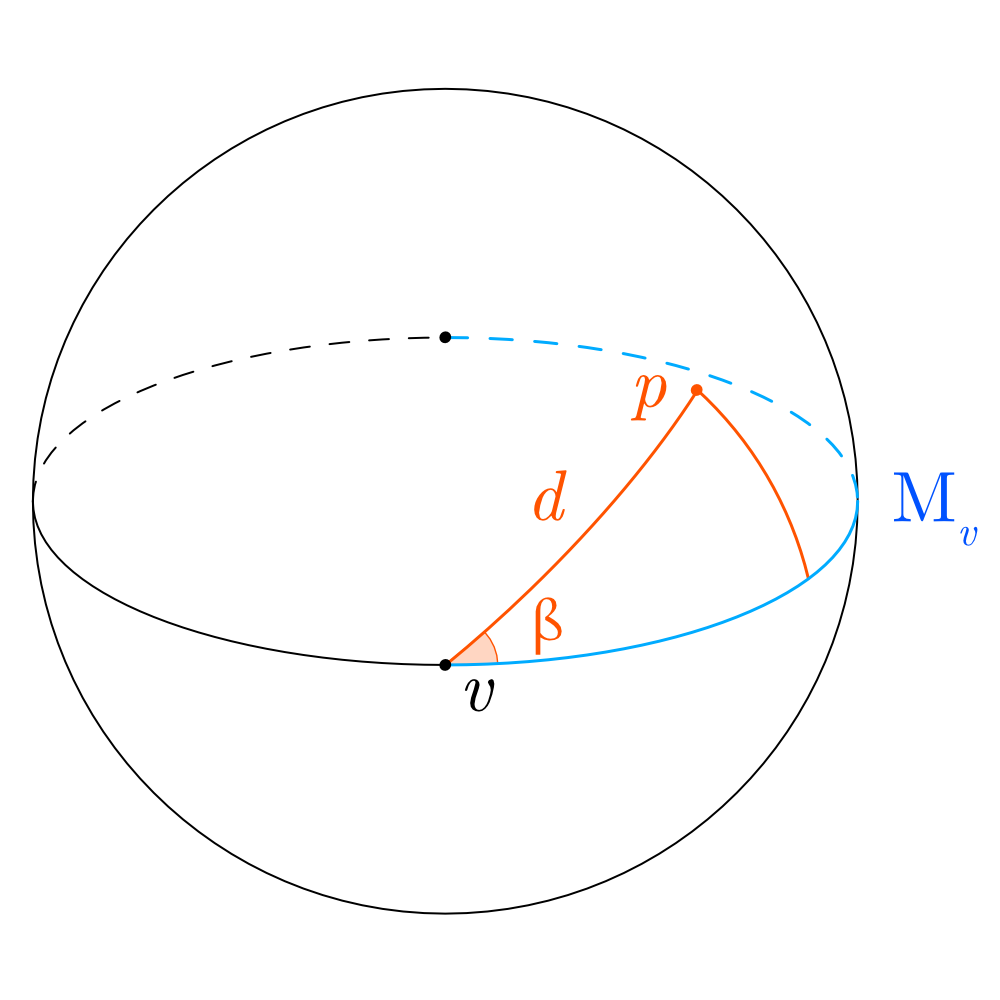}
\caption{The meridian $M_v$ for a vertex $v$ and a point $p = (d, \beta)$.}
\label{fig:Meridian}
\end{figure}

For this figures in this paper, we will always draw $S_v$ with $v$ on the side of the sphere ``facing us.'' This means that $M_v$ is the portion of the equator to the right of $v$ on the figure. This gives, for each point $p\in S_v$, that $p = (d, \beta)$, where $d$ is the spherical distance from $p$ to $v$, and $\beta$ is the ``longitude'' relative to the meridian $M_v$, measured in the same way as for longitude on the Earth. This representation is degenerate only when $p$ is $v$ or the antipode of $v$.	 

\subsection{Fibering $J_v$}

Let $0\leq d\leq2\pi$ be some spherical distance. Let $C_d$ be the circle of radius $d$ around $v$ on $S_v$. Pick some axis $a\in C_d$, and let $t$ be the distance from $v$ to $\rho_a^\delta(v)$. 

This distance $t$ is independent of our choice of $a$, which we show by applying some spherical trigonometry. See Figure~\ref{fig:Independence of t(d) from a} for the triangle $\triangle (a,v,\rho_a^\delta(v))$. We know that for any choice of $a$, the distances from $a$ to $v$ and $a$ to $\rho_a^\delta(v)$ are both equal to $d$, and the angle at $a$, i.e. $\angle(v, a, \rho_a^\delta(v))$, is $\delta$. This is enough to uniquely determine the remaining sides and angles (including $t$) in this triangle by spherical side-angle-side. For $0\leq d\leq 2\pi$, let $t(d)$ be the last side length in the unique spherical triangle defined by two edges of length $d$ meeting at angle $\delta$.

\begin{figure}[h!]
\centering
\includegraphics[width = 0.5\linewidth]{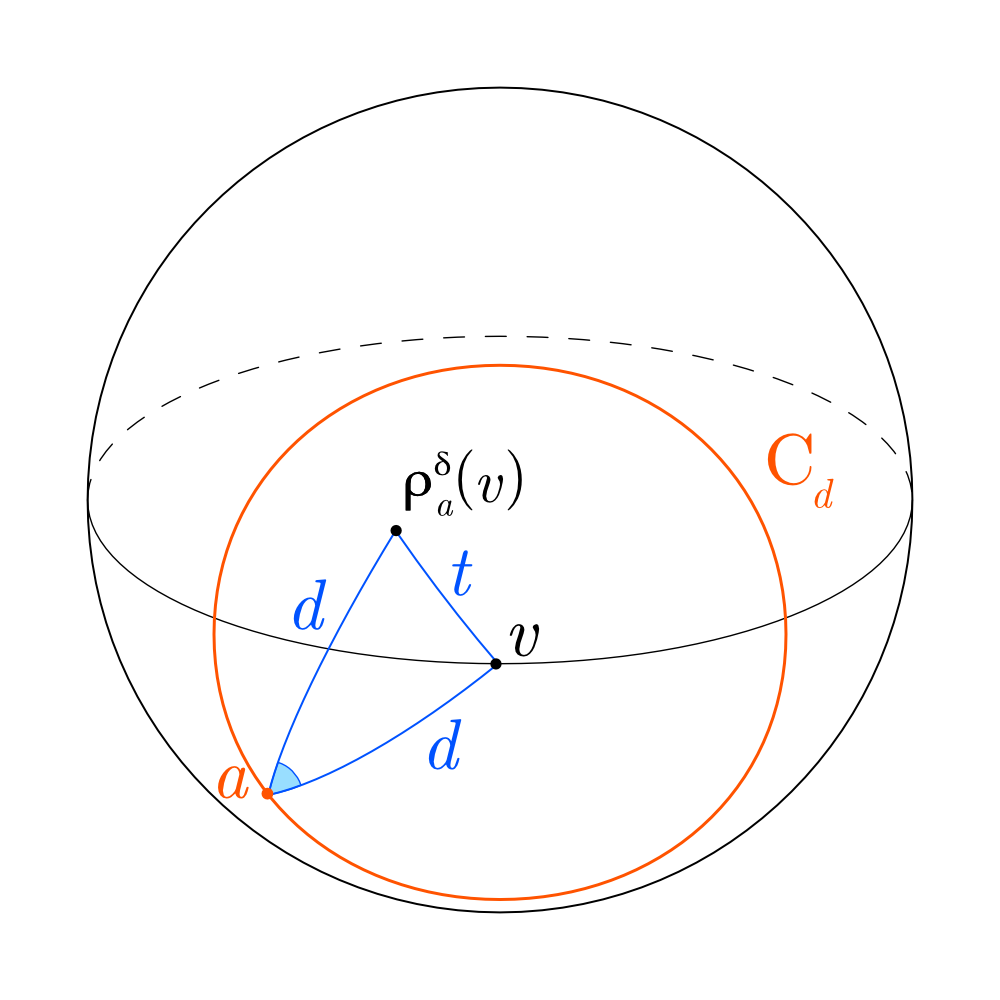}
\caption{The triangle $\triangle (a,v,\rho_a^\delta(v))$ on the sphere.}
\label{fig:Independence of t(d) from a}
\end{figure}

Let $C_{t(d)}$ be the circle of radius $t(d)$ around $v$ on $S_v$. Since $\rho_a^\delta(v)\in C_{t(d)}$ for all $a\in C_d$, this gives us the fact that $J_v|_{C_{d}}$, which we will write $j_{v;d}$ for simplicity, has codomain $C_{t(d)}$. This tells us that $J_v$ behaves nicely with respect to the ``latitude'' relative to $v$. To study $J_v$, we study $j_{v;d}$ for each $d$, then ``glue'' that knowledge together across all $d$ to get the behaviour of $J_v$ on the whole sphere. 

\subsection{Functions on Circles and $j_{v;d}$}

Since $j_{v;d}$ is a function from a circle to another circle, we can draw on the theory of the circle group, $S^1$, to help understand $j_{v;d}$. Let $S^1= \faktor{\R}{2\pi\mathbb Z}$, endowed with the usual metric, act on itself by rotation as a Lie group with identity $0$. Orient $S^1$ positively, i.e. let $S^1$ inherit its orientation from the positive orientation on $\R$.

We'll also need a little bit of the theory of group actions. Let $X$ and $Y$ be sets acted upon by a group $G$. A function $f\colon X\to Y$  is called \textit{$G$-equivariant} if, for all $g\in G$ and $x\in X$, $f(g\cdot x) = g\cdot f(x)$.

\begin{lemma}
Let $f\colon S^1\to S^1$ be any function. If $f$ is $S^1$-equivariant with respect to the action of $S^1$ on itself, then $f$ is an orientation-preserving bijective isometry, i.e. a rotation.
\end{lemma}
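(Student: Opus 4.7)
The plan is to exploit the fact that the action of $S^1$ on itself is free and transitive: equivariance with respect to such an action pins down $f$ completely once its value at a single point is known. I will use the identity $0 \in S^1$ as that point.

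Concretely, I would first compute $f$ explicitly. For any $g \in S^1$, write $g = g + 0$ in the additive notation of $S^1 = \R/2\pi\Z$, so that $g = g \cdot 0$ under the action. By $S^1$-equivariance,
\[
f(g) = f(g \cdot 0) = g \cdot f(0) = g + f(0).
\]
Setting $c := f(0)$, this shows $f(x) = x + c$ for every $x \in S^1$; in other words, $f$ is translation by $c$.

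Once $f$ is identified with a translation, I would verify the three properties in the conclusion more or less by inspection. Bijectivity follows because translation by $-c$ is a two-sided inverse. The isometry property follows because the metric on $S^1 = \R/2\pi\Z$ is translation-invariant by construction. Orientation-preservation follows because translation is the identity on the tangent bundle up to a global trivialization, so it preserves the chosen positive orientation inherited from $\R$. Finally, the paper's setup declares $S^1$ to act on itself ``by rotation,'' so translations on $\R/2\pi\Z$ are exactly the rotations of $S^1$, which is the remaining clause of the conclusion.

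I do not anticipate any real obstacle here: the statement is essentially an instance of the general principle that an equivariant self-map of a torsor under a group is left translation by a fixed element. The only thing worth being careful about is matching the word ``rotation'' (as used in the lemma) with ``translation in $\R/2\pi\Z$'' (as arises in the proof), which is resolved directly by the paper's definition of the $S^1$-action on itself.
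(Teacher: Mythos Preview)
Your proof is correct and is in fact more direct than the paper's. You immediately exploit that $S^1$ acts freely and transitively on itself to pin down $f$ as translation by $f(0)$, after which bijectivity, isometry, and orientation-preservation are immediate. The paper instead proceeds in stages: it first proves that $f$ is an isometry via the identity $d(x,g\cdot x)=|g|$, then separately argues surjectivity (and in doing so essentially rediscovers your formula $f(x)=x\cdot f(0)$, but uses it only for that step), and finally rules out the orientation-reversing case by a contradiction argument involving the fixed points of a hypothetical flip. Your torsor-style argument collapses all three stages into one computation; the paper's route is longer but has the minor expository advantage of making the isometry and orientation claims visibly independent of the additive structure, which aligns with how it later applies the result to the circles $C_d$ and $C_{t(d)}$ via Corollary~\ref{cor:ci}.
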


\begin{proof}
$S^1$ acts on itself by isometries, so for any $x, a, b\in S^1$, $d(a,b) = d(x\cdot a, x\cdot b)$. We'll write $d(0, x)$ as $|x|$. We first claim that, for all $g\in S^1$ and $x\in S^1$, $d(x, g\cdot x) = |g|$. To see this, apply $x^{-1}$; since $S^1$ is commutative we can pull the $x^{-1}$ inside the term $(g\cdot x)$: \[d(x, g\cdot x) = d(0, x^{-1}\cdot(g\cdot x)) = d(0, g\cdot0) = d(0, g) = |g|\text{.}\] 

We'll now prove that $f$ is an isometry. Let $x, y\in S^1$. We wish to show that $d(x, y) = d(f(x), f(y))$. Since $S^1$ is transitive, let $y = h\cdot x$ for some $h\in S^1$. This gives $d(x, y) = d(x, h\cdot x) = |h|$ and \[d(f(x), f(y)) = d(f(x), f(h\cdot x)) = d(f(x), h\cdot f(x)) = |h|\text{,}\] with the middle equality following from the hypothesis that $f$ commutes with $h$. Thus $d(x, y) = d(f(x), f(y))$ as desired.

Let $\isom(S^1)$ be the group of bijective isometries of the circle. Since $f$ is an isometry, it is injective, so to show that $f\in\isom(S^1)$, we need only prove surjectivity. Let $y\in S^1$. Since $S^1$ is transitive, there exists some $x\in S^1$ so that $y = x\cdot f(0)$. By equivariance, $x\cdot f(0) = f(x\cdot 0) = f(x) = y$, and thus $f$ is surjective.

Since $f\in\isom(S^1)$, it is either a rotation or a flip. The rotations are orientation-preserving and the flips are orientation-reversing. Suppose for the sake of contradiction that $f$ is a flip. Let $\phi$, $\phi'$ be the two fixed points of $f$ and let $x$, $x'$ be the two points exactly half-way between the two fixed points of $f$.
By construction, $f(x) = x'$. Let $\theta$ be the rotation taking $\phi$ to $x$. \[f(\theta\cdot \phi) = f(x) = x'\] but \[\theta\cdot f(\phi) = \theta\cdot\phi = x\] and $x\not=x'$, contradiction. Thus $f$ is a rotation, as desired.
\end{proof}

\begin{corollary}\label{cor:ci}
Let $A, B, C$ be Lie groups isomorphic to $S^1$ with Lie group isomorphisms $I_A, I_B,I_C$ from $A, B, C\to S^1$ which are orientation-preserving isometries. Let $C$ act on $A$ and $B$ by rotation, so for $c\in C$ and $a\in A$, $c\cdot a \coloneqq I_A^{-1}(I_C(c))\cdot a$. The action on $B$ is defined similarly. Let $f\colon A\to B$. If $f$ is $C$-equivariant, then $f$ is an orientation-preserving bijective isometry.
\end{corollary}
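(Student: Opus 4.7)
The plan is to reduce directly to the previous lemma by conjugating $f$ through the given isomorphisms. I would define $\tilde f\colon S^1\to S^1$ by $\tilde f = I_B \circ f \circ I_A^{-1}$, so that the diagram commutes and $f$ is recoverable as $f = I_B^{-1}\circ \tilde f\circ I_A$. Since $I_A$ and $I_B$ are by hypothesis orientation-preserving bijective isometries, any property of $\tilde f$ of the desired type (being an orientation-preserving bijective isometry) will transfer back to $f$ via this conjugation, as compositions of orientation-preserving bijective isometries are again of the same type.

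Thus the only real content is to verify that $\tilde f$ is $S^1$-equivariant with respect to the standard action of $S^1$ on itself, so that the previous lemma applies and yields that $\tilde f$ is a rotation. To check this, I would let $s,x\in S^1$ and set $c = I_C^{-1}(s)$ and $a = I_A^{-1}(x)$. Unwinding the definition of the $C$-action on $A$, the element $s\cdot x\in S^1$ corresponds under $I_A^{-1}$ to $c\cdot a\in A$, since $I_A$ is a Lie group isomorphism intertwining the two rotation actions. Applying the $C$-equivariance of $f$ gives $f(c\cdot a) = c\cdot f(a)$, and then applying $I_B$ (again a Lie group isomorphism intertwining actions) yields $\tilde f(s\cdot x) = s\cdot \tilde f(x)$, as needed.

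With $\tilde f$ shown to be $S^1$-equivariant, the previous lemma immediately gives that $\tilde f$ is an orientation-preserving bijective isometry of $S^1$. Conjugating back through $I_A$ and $I_B$ finishes the proof. The only step requiring any care is the bookkeeping in the equivariance check, since three different copies of $S^1$ are in play and one must use that each $I_A,I_B,I_C$ is simultaneously a group isomorphism and an orientation-preserving isometry in order for the actions to translate cleanly; no nontrivial calculation arises once this identification is made.
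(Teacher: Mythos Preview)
Your proposal is correct and essentially matches the paper's approach: the paper says the corollary ``follows from application of the isomorphisms at the appropriate time in the above proof,'' which is exactly your conjugation $\tilde f = I_B\circ f\circ I_A^{-1}$ written out more explicitly. Your framing---reduce to the lemma via conjugation rather than re-run the lemma's proof with isomorphisms inserted---is arguably cleaner, but the underlying argument is the same.
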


\begin{proof}
The corollary follows from application of the isomorphisms at the appropriate time in the above proof. The distance $d(x, g\cdot x) = d(x, I_A^{-1}(I_C(g))\cdot x) = |I_A^{-1}(I_C(g))|$ is still invariant under changing $x$. The fact that the isomorphisms are orientation-preserving is required in the second half of the proof, since otherwise the isomorphisms could ``flip $S^1$ over'' with respect to $C$.
\end{proof}

We need one more ingredient, a statement about $SO(3)$.

\begin{theorem}[$SO(3)$ Conjugation Rule]\label{thm:scr}
For all axes $a, b$ and rotation amounts $\theta, \gamma$, we have \[\rho^\gamma_{\rho_b^\theta(a)} = \rho^\theta_b\circ\rho_a^\gamma\circ\rho_b^{-\theta}\text{.}\]
\end{theorem}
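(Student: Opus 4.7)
The plan is to verify directly that the right-hand side is a rotation with the correct axis and angle. Write $\sigma := \rho_b^\theta \circ \rho_a^\gamma \circ \rho_b^{-\theta}$; since $\sigma$ is a composition of three elements of $SO(3)$, it is itself an element of $SO(3)$, so by Euler's Rotation Theorem it is either the identity or a rotation by some angle around some axis. It therefore suffices to pin down an axis and an oriented angle.

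First I would check that $\sigma$ fixes $\rho_b^\theta(a)$: this is immediate from $\rho_a^\gamma(a) = a$, since $\sigma(\rho_b^\theta(a)) = \rho_b^\theta(\rho_a^\gamma(\rho_b^{-\theta}(\rho_b^\theta(a)))) = \rho_b^\theta(a)$. Hence as long as $\sigma$ is nontrivial, its axis is $\pm\rho_b^\theta(a)$; we still need to identify the angle \emph{with its sign}.

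For this, I would pick an orthonormal right-handed frame $(a, e_1, e_2)$ adapted to $a$, so that $\rho_a^\gamma$ acts as the identity on $a$ and as a planar rotation $e_1\mapsto \cos\gamma\,e_1 + \sin\gamma\,e_2$, $e_2\mapsto -\sin\gamma\,e_1 + \cos\gamma\,e_2$ by the right-hand rule. Because $\rho_b^\theta \in SO(3)$, applying it gives a new right-handed orthonormal frame $(a', e_1', e_2')$ with $a' = \rho_b^\theta(a)$ and $e_i' = \rho_b^\theta(e_i)$. Computing $\sigma$ on this primed frame turns into the computation of $\rho_a^\gamma$ on the original frame via $\rho_b^{-\theta}$ and $\rho_b^\theta$, yielding $\sigma(e_1') = \cos\gamma\,e_1' + \sin\gamma\,e_2'$ and similarly for $e_2'$. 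This exhibits $\sigma$ as rotation by $\gamma$ in the plane perpendicular to $a'$, oriented by the right-hand rule with respect to $a'$, which is exactly $\rho_{\rho_b^\theta(a)}^\gamma$.

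The main obstacle, and the only thing to be careful about, is the sign of the angle: axis and magnitude alone cannot distinguish $\rho_{a'}^\gamma$ from $\rho_{a'}^{-\gamma} = \rho_{-a'}^\gamma$. The right-handed-frame argument above handles this cleanly because $\rho_b^\theta$ is orientation-preserving, so a right-handed frame stays right-handed and the sign of $\gamma$ is transported honestly. A secondary point is the degenerate case $\gamma \equiv 0 \pmod{2\pi}$, where $\sigma$ is the identity and the axis is undefined; but in that case both sides equal the identity, so the identity still holds trivially.
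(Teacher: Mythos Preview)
Your argument is correct and is essentially the same as the paper's: the paper does not give a detailed proof but simply notes that conjugation in $SO(3)$ corresponds to change of basis (together with Euler's rotation theorem), and your frame computation is exactly that change-of-basis argument carried out explicitly, with appropriate care about orientation to get the sign of $\gamma$ right.
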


The rule follows from the fact that we can think of $SO(3)$ as a matrix group with conjugation representing change-of-basis, as well as Euler's rotation theorem. For the material required for this proof, see \cite{Tapp2016}.

We apply these theorems to understand the structure of $j_{v;d}$.

\begin{lemma}
The function $j_{v;d}\colon C_d\to C_{t(d)}$ is an orientation-preserving bijective isometry for all $0\leq d\leq 2\pi$.
\end{lemma}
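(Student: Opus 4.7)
The plan is to apply Corollary~\ref{cor:ci} with $A = C_d$, $B = C_{t(d)}$, and $C$ the one-parameter subgroup $\{\rho_v^\phi : \phi \in S^1\} \subseteq SO(3)$ of rotations about the axis through $v$ and the origin. All three are naturally Lie groups isomorphic to $S^1$: each $\rho_v^\phi$ fixes $v$ and acts as a rigid motion of $S_v$, so it restricts to a rotation of any circle centered at $v$ on $S_v$. Orienting $C_d$ and $C_{t(d)}$ so that the action of $\rho_v^\phi$ rotates by $\phi$ in the positive direction yields the required orientation-preserving isometric isomorphisms.

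The heart of the argument is verifying $C$-equivariance. For $\rho_v^\phi \in C$ and $a \in C_d$, I need to show
\[j_{v;d}\bigl(\rho_v^\phi(a)\bigr) = \rho_v^\phi\bigl(j_{v;d}(a)\bigr),\]
which unpacks, by the definition of $j_{v;d}$, to the identity
\[\rho^\delta_{\rho_v^\phi(a)}(v) = \rho_v^\phi\bigl(\rho_a^\delta(v)\bigr).\]
This is precisely where Theorem~\ref{thm:scr} does the work. Applying the $SO(3)$ Conjugation Rule with $b = v$, $\theta = \phi$, $\gamma = \delta$, the left-hand side equals $\bigl(\rho_v^\phi \circ \rho_a^\delta \circ \rho_v^{-\phi}\bigr)(v)$, and since $\rho_v^{-\phi}$ fixes $v$ (as the rotation axis passes through $v$), this collapses to $\rho_v^\phi\bigl(\rho_a^\delta(v)\bigr)$, as required.

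With equivariance in hand, Corollary~\ref{cor:ci} delivers the conclusion that $j_{v;d}$ is an orientation-preserving bijective isometry. The degenerate values $d \in \{0, \pi, 2\pi\}$, for which $C_d$ collapses to a single point ($v$ or its antipode, both of which are fixed by $\rho_a^\delta$ for any $a$ lying on those loci), force $t(d) = 0$ and reduce $j_{v;d}$ to a trivially isometric map between singletons. I do not anticipate a conceptual obstacle here; the only mild subtlety is making sure the orientations on $C_d$ and $C_{t(d)}$ are chosen consistently so that the Conjugation Rule computation is genuinely a statement about orientation-preserving equivariance rather than just set-theoretic commutation, but this is a matter of careful bookkeeping rather than substantive difficulty.
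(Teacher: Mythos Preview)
Your proposal is correct and follows essentially the same approach as the paper: both apply Corollary~\ref{cor:ci} with the group $\{\rho_v^\phi : \phi\in S^1\}$ acting on $C_d$ and $C_{t(d)}$, and both verify equivariance via the $SO(3)$ Conjugation Rule together with $\rho_v^{-\phi}(v)=v$. The paper is slightly more explicit about the isomorphisms to $S^1$ (defining $I_d$, $I_t$ via the meridian $M_v$), whereas you defer that to ``careful bookkeeping,'' but the substance is identical.
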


\begin{proof}
In order to apply Corollary~\ref{cor:ci} above, we need to find our isomorphisms. For the following, see Figure~\ref{fig:Construction of Id}. Since $C_d$ and $C_{t(d)}$ are centered at a point on the equator, the equator intersects each circle twice. For each circle, let the intersection with the meridian $M_v$ be $k_d$ and $k_t$ respectively. Naturally identify $C_d$ and $C_{t(d)}$ with $S^1$ by taking a point on either circle to its angle by the right hand rule around $v$ away from $k_i$, defining isomorphisms $I_d, I_t\colon C_d, C_{t(d)} \to S^1$ with $I_d(k_d) = I_t(k_t) = 0$. These maps represent the ``longitude'' in our metaphor from before. 

\begin{figure}[h]
\centering
\includegraphics[width = 0.5\linewidth]{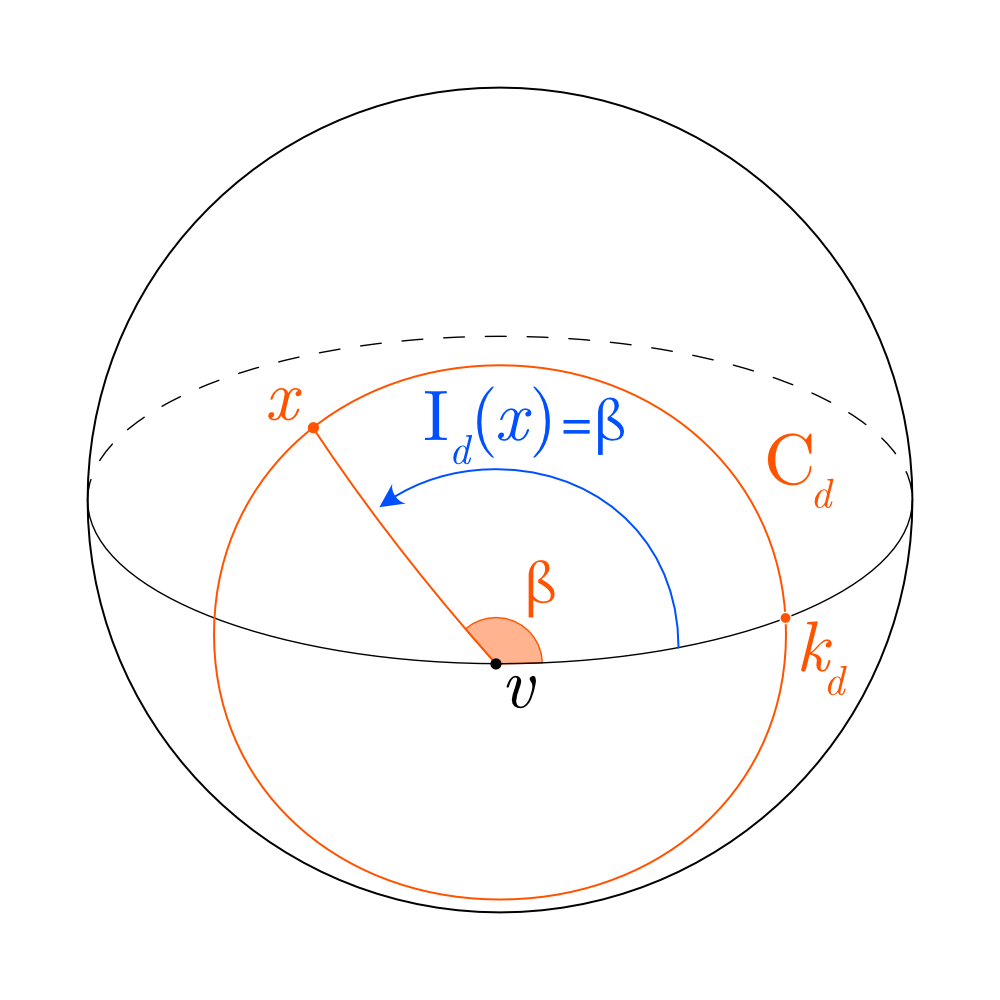}
\caption{Computing the longitude $I_d(x)$.}
\label{fig:Construction of Id}
\end{figure}

The set of functions $\{\rho_v^\theta \,|\, \theta\in S^1\}$ is clearly closed under function composition and isomorphic as a group to $S^1$. Call this group $G$. The group $G$ acts on $C_d$ and $C_{t(d)}$ by rotation, and one can easily check that the form of the action required in Corollary~\ref{cor:ci} holds. Furthermore, orienting $C_d$ and $C_{t(d)}$ by the right hand rule around $v$, the isomorphisms $I_d$ and $I_t$ preserve orientation, as required.

We wish to prove that $j_{v;d}$ is $G$-equivariant, that is, for all rotations $\rho_v^\theta\in G$ and $x\in C_d$, \[j_{v;d}(\rho_v^\theta(x)) = \rho_v^\theta(j_{v;d}(x))\text{,}\tag{\textasteriskcentered}\] which by Corollary~\ref{cor:ci} will complete the proof. 

Using the definitions, the left-hand side of $(\textasteriskcentered)$ gives \[j_{v;d}(\rho_v^\theta(x)) = \rho^\delta_{\rho_v^\theta(x)}(v)\text{.}\] Now by the $SO(3)$ conjugation rule, \[\rho^\delta_{\rho_v^\theta(x)}(v) = \rho_v^\theta(\rho^\delta_x(\rho_v^{-\theta}(v))) = \rho_v^\theta(\rho^\delta_x(v))\text{,}\] where $\rho_v^{-\theta}(v) = v$ since rotation around $v$ fixes $v$.

Focusing on the right-hand side of $(\textasteriskcentered)$, we see immediately that \[\rho_v^\theta(j_{v;d}(x)) = \rho_v^\theta(\rho^\delta_{x}(v))\] and thus $j_{v;d}$ is $G$-equivariant, as desired.
\end{proof}

We can now conclude that $j_{v;d}$ is an orientation-preserving bijective isometry from $C_d$ to $C_{t(d)}$, but what does this actually mean? First, consider the function $I_t\circ j_{v;d}\circ I_d^{-1}\colon S^1\to S^1$. By composition, since the identifications are orientation-preserving isometries, this map is an orientation-preserving isometry, and since it's a function from $S^1$ to $S^1$, it is therefore a rotation. Let this be rotation by some $\tau_{v;d}\in S^1$, so that $I_t\circ j_{v;d}\circ I_d^{-1}(x)$ = $\tau_{v;d}\cdot x$ for all $x\in S^1$.

This means that $j_{v;d}$ can be given by $I_t^{-1}(\tau_{v;d}\cdot I_d(x))$. Informally, this takes $x\in C_d$, rotates it by $\tau_{v;d}$ on $C_d$, and then scales it down to $C_{t(d)}$. We can use spherical geometry again to find $\tau_{v;d}$, which only depends on the distance $d$.

To compute $\tau_{v;d}$, take the equality \[I_t\circ j_{v;d}\circ I_d^{-1}(x) = \tau_{v;d}\cdot x\] and plug in $x = 0$ to get \[\tau_{v;d} = I_t(j_{v;d}(I_d^{-1}(0))) = I_t(j_{v;d}(k_d)) = I_t(\rho_{k_d}^\delta(v))\] We now need to find $I_t(\rho_{k_d}^\delta(v))$. To do this, we'll apply some spherical trigonometry to the triangle $\triangle(k_d, v, \rho_{k_d}^\delta(v))$. For the following, see Figure~\ref{fig:Independence of tau(d)}.

\begin{figure}[h]
\centering
\includegraphics[width = \linewidth]{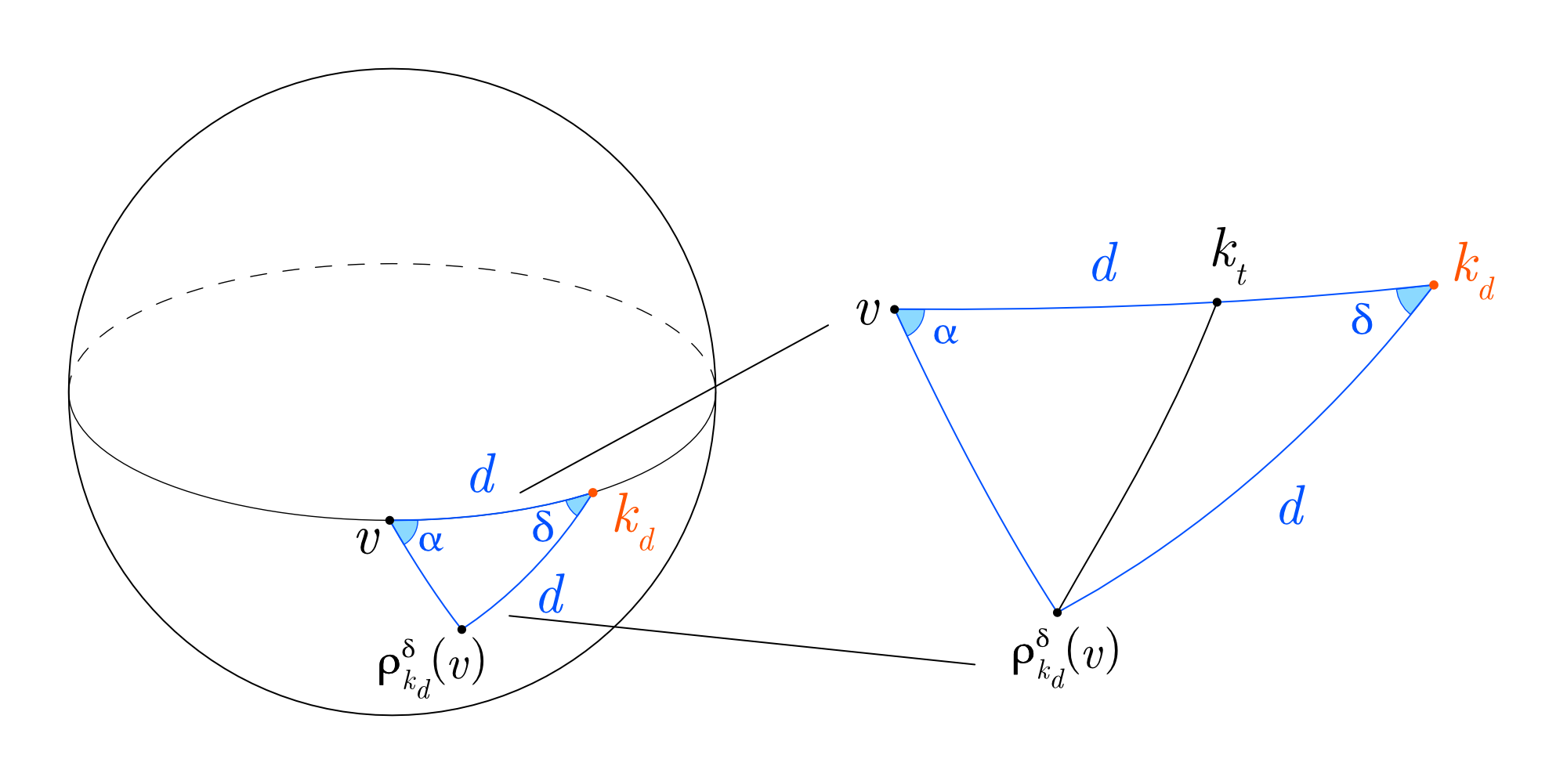}
\caption{The triangle $\triangle(k_d, v, \rho_{k_d}^\delta(v))$ on the sphere, showing the relevant angle to compute $I_t(\rho_{k_d}^\delta(v))$.}
\label{fig:Independence of tau(d)}
\end{figure}

By definition, $I_t(\rho_{k_d}^\delta(v))$ is the angle on $C_{t(d)}$ counterclockwise from the meridian $M_v$ to $\rho_{k_d}^\delta(v)$. Figure~\ref{fig:Independence of tau(d)} shows that the angle $\alpha$ is the angle from the meridian to $\rho_{k_d}^\delta(v)$ \textit{clockwise}, and so we get that $\tau_{v;d} = I_t(\rho_{k_d}^\delta(v)) = -\alpha$. By spherical side-angle-side we can see that this angle $\alpha$ depends only on the parameters $d$ and $\delta$ of triangle $\triangle(k_d, v, \rho_{k_d}^\delta(v))$. For $0\leq d\leq 2\pi$, let $\tau(d)$ be the angle $-\alpha$, where $\alpha$ is the other angle in the spherical triangle given by two edges of length $d$ connecting at the angle $\delta$, as in Figure~\ref{fig:Independence of tau(d)}.

\subsection{Reconstructing $J_v$}

Now we have an explicit form for $j_{v;d} = J_v|_{C_d}$ for all $0\leq d\leq2\pi$, and we can glue these explicit forms together to get the whole function. We will represent this in coordinates: Let $p\in S_v$, with $p$ not $v$ or its antipode. Write $p$ as $(d, \beta)$, where $d$ is the distance from $p$ to $v$ and $\beta$ is the ``longitude'' of $p$, i.e. $I_d(p)$. The image $J_v(p) = J_v(d, \beta)$ is the same as the image of $j_{v;d}(p)$, which is $(t(d), \tau(d) \cdot \beta)$. %The function $J_v$ clearly acts as the identity on $v$ and its antipode.

This explicit description of $J_v$ is enough to sketch $J_v^{-1}(T_v) = A_v^\delta$ for explicit examples. Figure~\ref{fig:Sketch of Allowable Set for Square} shows a sketch of $A_v^\delta$ for a vertex of the square embedded with its center at the origin using a fairly large choice of $\delta$. We encourage the reader to compute the shape of this allowable set for themselves, or at least to verify that axes in the shaded region actually rotate $v$ into $T_v$. We drew this by combining intuition about how $v$ moves under small rotations with simple heuristic estimates for $t(d)$ and $\tau(d)$. Such heuristic estimates can be quite accurate when $\delta$ is very small.
\begin{figure}[h!]
\centering
\includegraphics[width = 0.5\linewidth]{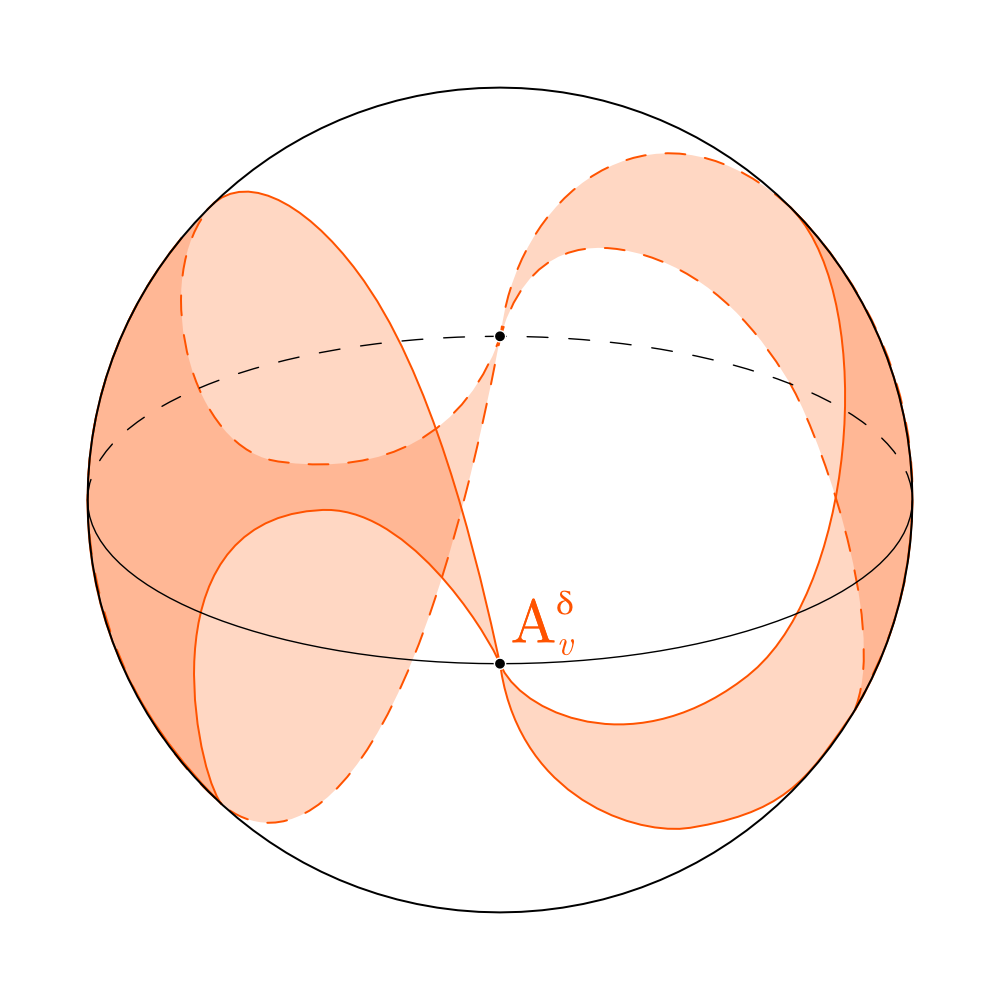}
\caption{A sketch of $J_v^{-1}(T_v) = A_v^\delta$, where $v$ is a vertex of the unit square centered at the origin.}
\label{fig:Sketch of Allowable Set for Square}
\end{figure}

The functions $t(d)$ and $\tau(d)$ are fairly complicated trigonometric functions as an artifact of spherical trigonometry, but analyzing this form can give us information about $J_v$ without needing to delve too deeply into the details. The next two lemmas capture the intuitive idea that, for different vertices $v$ and $w$ of $P$ with the same distance to the origin, the functions $J_v$ and $J_w$ ``do the same thing'' relative to their respective vertices. This idea is captured by noting that, since $v$ and $w$ are both on the equator of $S_v = S_w$, there is some rotation $\phi$ around the positive pole that takes $v$ to $w$. We first prove Lemma~\ref{lem:ppc}: for a point $p$, the coordinates of $p$ relative to $v$ are the same as the coordinates of $\phi(p)$ relative to $\phi(v) = w$. Since $J_v$ is defined by these coordinates, we can then show Lemma~\ref{lem:jsfr}, which captures the idea that $J_v$ and $J_{\phi(v)}$ are the same function relative to their respective vertices by expressing that idea as a conjugation by $\phi$.

\begin{lemma}\label{lem:ppc}
For a point $p\in S_v$, with $p$ not $v$ or its antipode, let $(d, \beta)_v$ be the coordinates of $p$ relative to $v$. Let $\phi$ be any rotation around the positive pole of $S_v$, i.e. around the $z$-axis. \[\phi((d, \beta)_v) = (d,\beta)_{\phi(v)}\]
\end{lemma}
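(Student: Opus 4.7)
The plan is to exploit the fact that $\phi$, being a rotation of $\R^{3}$, is an orientation-preserving isometry and, being a rotation around the $z$-axis specifically, preserves both the equator of $S_v$ and the positive pole. Everything used to define the coordinates $(d,\beta)$ is built out of spherical distances, the equator, the positive pole, and the right-hand orientation, so each piece of the definition will be intertwined with $\phi$ individually.

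First I would unpack the two pieces of the coordinate. For the latitude, the fact that $\phi \in SO(3)$ acts by isometries on $S_v$ immediately gives $d(\phi(p), \phi(v)) = d(p,v) = d$, and $\phi$ maps the circle $C_d$ around $v$ onto the circle of radius $d$ around $\phi(v)$. For the longitude, I need to show that the meridian on $S_{\phi(v)}$ determined by $\phi(v)$ is exactly the image of $M_v$ under $\phi$: since $\phi$ is a rotation around the $z$-axis it preserves the $xy$-plane and the positive pole of $S_v$, so the equator maps to the equator and the ``counterclockwise from $v$'' arc maps precisely to the ``counterclockwise from $\phi(v)$'' arc, i.e. $\phi(M_v) = M_{\phi(v)}$. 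In particular $\phi(k_d) = k'_d$, the intersection of $M_{\phi(v)}$ with the circle of radius $d$ around $\phi(v)$.

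It then remains to match up the angular measurement. Write $p = (d,\beta)_v$, so $\beta$ is the angle at $v$, measured on $C_d$ counterclockwise as viewed from the positive pole, from $k_d$ to $p$. Because $\phi$ is an orientation-preserving isometry of $S_v$ that fixes the positive pole, it preserves both the magnitude and the sense of this angle, so the angle at $\phi(v)$ on $\phi(C_d)$ from $\phi(k_d) = k'_d$ to $\phi(p)$, measured counterclockwise as viewed from the positive pole, is again $\beta$. By the definition of the coordinate chart relative to $\phi(v)$ this says $\phi(p) = (d,\beta)_{\phi(v)}$.

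The only real subtlety, and the place I would take the most care, is the orientation-preservation step: one needs $\phi$ not merely to be an isometry but specifically to preserve the outward orientation used to declare ``counterclockwise from the positive pole,'' which is automatic because $\phi$ is in $SO(3)$ and fixes the $z$-axis pointwise. Everything else is a direct translation of the definition given just before the lemma.
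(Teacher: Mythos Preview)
Your proof is correct and follows essentially the same approach as the paper's: both verify the latitude coordinate via the isometry property of $\phi$, then argue that $\phi(M_v)=M_{\phi(v)}$ and hence $\phi(k_d)$ is the new base point, and finally conclude that the longitude angle is preserved. Your treatment is in fact slightly more explicit than the paper's about the orientation issue, spelling out that $\phi\in SO(3)$ fixes the positive pole and therefore preserves the counterclockwise sense, whereas the paper simply appeals to $\phi$ ``preserving the surface geometry of $S_v$.''
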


\begin{figure}[h!]
\centering
\includegraphics[width = 0.5\linewidth]{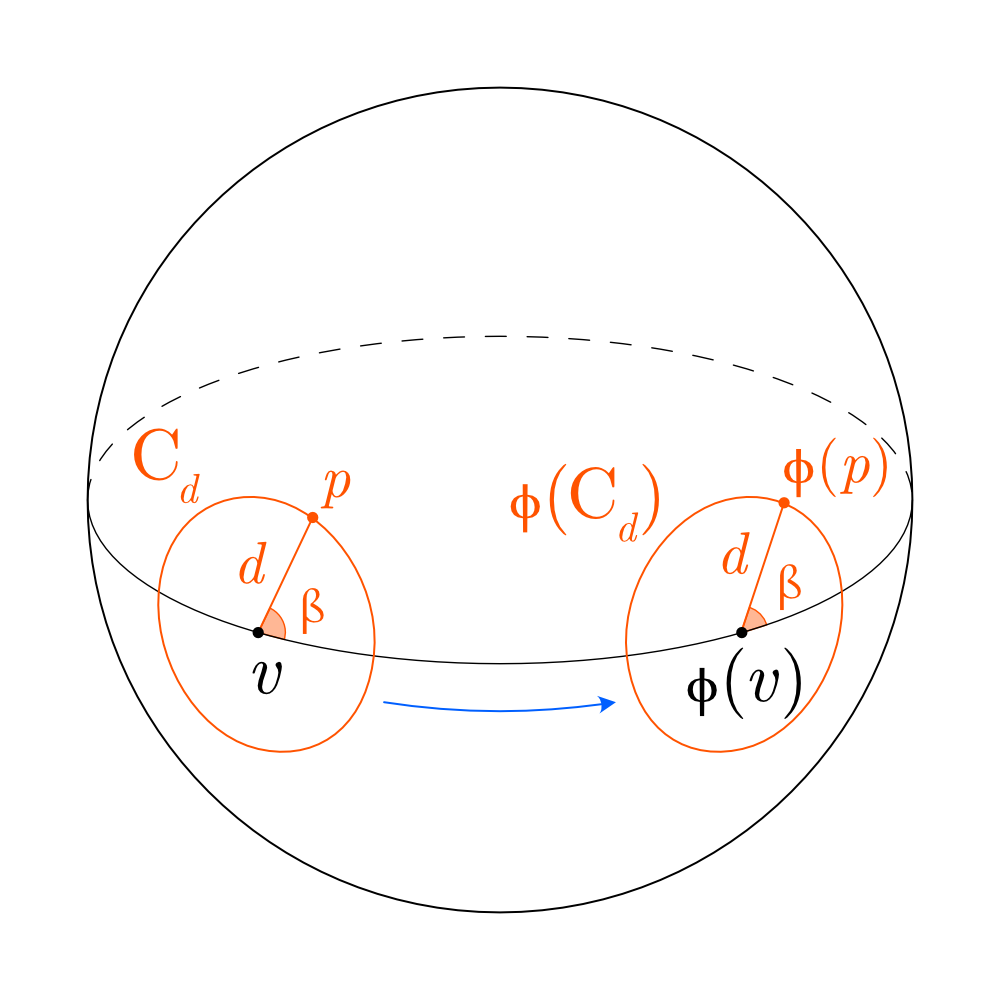}
\caption{The arrangement of $v$, $C_d$, and $p$ are rotated about the positive pole.}
\label{fig:Phi Commutes with Coordinatization}
\end{figure}

\begin{proof}
See Figure~\ref{fig:Phi Commutes with Coordinatization}. The value $d$ is the distance from $p$ to $v$, which is equal to the distance from $\phi(p)$ to $\phi(v)$ since $\phi$ is an isometry. Let $C_d$ be the circle of radius $d$ around $v$, and let $k_d$ be the intersection of $C_d$ with the meridian $M_v$. The longitude $\beta$ of $p$ relative to $v$ is the angle $\angle(k_d, v, p)$. The circle $\phi(C_d)$ is the circle of radius $d$ around $\phi(v)$. The meridian of $\phi(v)$ is $\phi(M_v)$, and the intersection of $\phi(C_d)$ and the meridian $\phi(M_v)$ is $\phi(k_d)$, so the longitude of $p$ relative to $\phi(v)$ is $\angle(\phi(k_d), \phi(v), \phi(p))$, which equals $\beta$ because $\phi$ preserves the surface geometry of $S_v$.
\end{proof}

\begin{lemma}\label{lem:jsfr}
Let $X\subseteq S_v$ be any set. Let $\phi$ be any rotation around the positive pole of $S_v$. \[J_v^{-1}(X) = \phi^{-1}(J_{\phi(v)}^{-1}(\phi(X)))\]
\end{lemma}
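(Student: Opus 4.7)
The plan is to reduce the set equality to a single point-wise identity about how the rotation $\rho_a^{\delta}$ conjugates under $\phi$, and then invoke the $SO(3)$ Conjugation Rule (Theorem~\ref{thm:scr}).

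First I would unpack both sides. Chasing definitions on the right, $a\in \phi^{-1}(J_{\phi(v)}^{-1}(\phi(X)))$ is equivalent to $\rho_{\phi(a)}^{\delta}(\phi(v)) \in \phi(X)$, while $a\in J_v^{-1}(X)$ is equivalent to $\rho_a^{\delta}(v)\in X$. Since $\phi$ is a bijection of $S_v$, the containment $\rho_{\phi(a)}^{\delta}(\phi(v)) \in \phi(X)$ is equivalent to $\phi^{-1}(\rho_{\phi(a)}^{\delta}(\phi(v))) \in X$. So the lemma reduces to the single pointwise identity
\[\phi^{-1}\bigl(\rho_{\phi(a)}^{\delta}(\phi(v))\bigr) = \rho_a^{\delta}(v),\]
or equivalently $\rho_{\phi(a)}^{\delta}(\phi(v)) = \phi(\rho_a^{\delta}(v))$.

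Next I would write $\phi = \rho_{z}^{\theta}$ where $z$ is the positive pole (so $\phi$ rotates around the $z$-axis). Then $\phi(a) = \rho_{z}^{\theta}(a)$, and the $SO(3)$ Conjugation Rule gives
\[\rho_{\phi(a)}^{\delta} = \rho_{\rho_{z}^{\theta}(a)}^{\delta} = \rho_{z}^{\theta}\circ \rho_a^{\delta}\circ \rho_{z}^{-\theta} = \phi\circ \rho_a^{\delta}\circ \phi^{-1}.\]
Applying both sides to $\phi(v)$ and using $\phi^{-1}(\phi(v)) = v$ yields $\rho_{\phi(a)}^{\delta}(\phi(v)) = \phi(\rho_a^{\delta}(v))$, which is exactly the identity we needed.

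There is no real obstacle here beyond bookkeeping: the only non-formal ingredient is Theorem~\ref{thm:scr}, which is precisely designed to convert the axis change $a \mapsto \phi(a)$ into a conjugation by $\phi$. The mild subtlety is remembering that $\phi$ acts on $S_v$ as a bijection, which is what lets us pass $\phi^{-1}$ freely through set containments of the form $y\in \phi(X)$. Once that is in place, the chain of equivalences closes up immediately to give $J_v^{-1}(X) = \phi^{-1}(J_{\phi(v)}^{-1}(\phi(X)))$.
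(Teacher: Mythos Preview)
Your proof is correct and is in fact cleaner than the paper's own argument. The paper proves the identity by first descending to the fiber maps $j_{v;d}$, invoking the coordinate description of $J_v$ together with Lemma~\ref{lem:ppc} to show $j^{-1}_{v;d}(x) = \phi^{-1}(j^{-1}_{\phi(v);d}(\phi(x)))$, and then reassembling the two set inclusions separately (the second by substituting $\phi^{-1}$ for $\phi$). You bypass all of that machinery: you reduce the set equality to the single pointwise identity $\rho_{\phi(a)}^{\delta}(\phi(v)) = \phi(\rho_a^{\delta}(v))$ and read this off directly from the $SO(3)$ Conjugation Rule. The paper's route exercises the latitude/longitude fibering developed in Section~3, which is thematically consistent with the surrounding exposition, but your approach shows that Lemma~\ref{lem:ppc} is not actually needed here and that the result is an immediate formal consequence of Theorem~\ref{thm:scr}.
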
 

\begin{proof}
We'll begin by proving that, for all $0\leq d\leq2\pi$ and for all $x\in C_{t(d)}$, 
\[j^{-1}_{v;d}(x) = \phi^{-1}(j^{-1}_{\phi(v);d}(\phi(x)))\]
If $d = 0$ or $d=2\pi$, then $t(d) = 0$ and hence $x = v$, in which case the equality is obvious. Consider now the case for $0<d<2\pi$. 

Since $0<d<2\pi$ and $x\in C_{t(d)}$, $x$ is neither $v$ nor its antipode. This means that we can write $x = (t(d), \tau(d) + \beta)_v$. We can now use the co-ordinate representation of $j^{-1}_{v;d}$ and apply Lemma~\ref{lem:ppc} to see \[j^{-1}_{v;d}(x) = j^{-1}_{v;d}((t(d), \tau(d) \cdot \beta)_v) = (d, \beta)_v\] and on the right we get \[\phi^{-1}(j^{-1}_{\phi(v);d}(\phi(x))) = \phi^{-1}(j^{-1}_{\phi(v);d}((t(d), \tau(d) \cdot \beta)_{\phi(v)}))\] \[=\phi^{-1}((d, \beta)_{\phi(v)}) = (d,\beta)_v\] which shows the desired equality.

To show that $J_v^{-1}(X) = \phi^{-1}(J_{\phi(v)}^{-1}(\phi(X)))$, we first show the inclusion $J_v^{-1}(X) \subseteq \phi^{-1}(J_{\phi(v)}^{-1}(\phi(X)))$. Let $y\in J_v^{-1}(X)$. This means that $J_v(y) \in X$, so by letting $d$ be the distance from $y$ to $v$, $j_{v;d}(y)\in X$.

We now apply the above equality to show that \[j^{-1}_{v;d}(j_{v;d}(y)) = \phi^{-1}(j^{-1}_{\phi(v);d}(\phi(j_{v;d}(y))))\] On the left, we just get $y$. On the right, we see that $\phi(j_{v;d}(y))\in \phi(X)$, so the point $j^{-1}_{\phi(v);d}(\phi(j_{v;d}(y)))$ gets mapped into $\phi(X)$ by $j_{\phi(v);d}$ and is therefore in the preimage $J_{\phi(v)}^{-1}(\phi(X))$. This means that $\phi^{-1}(j^{-1}_{\phi(v);d}(\phi(j_{v;d}(y))))\in \phi^{-1}(J_{\phi(v)}^{-1}(\phi(X)))$, but by the above equality, the point on the left is just $y$, and thus $y\in \phi^{-1}(J_{\phi(v)}^{-1}(\phi(X)))$ as desired.%grumble grumble. This sentence is not good.

The other inclusion, $\phi^{-1}(J_{\phi(v)}^{-1}(\phi(X))) \subseteq J_v^{-1}(X)$, follows from the above inclusion by noting that $\phi^{-1}$ is also a rotation about the positive pole. Let $Y = \phi(X)$, so $X = \phi^{-1}(Y)$, which follows since $\phi^{-1}$ is a bijection. Let $w = \phi(v)$. By our inclusion, with rotation $\phi^{-1}$, point $w$ and set $Y$, \[J_w^{-1}(Y) \subseteq \phi(J_{\phi^{-1}(w)}^{-1}(\phi^{-1}(Y)))\] On the right, $\phi(J_{\phi^{-1}(w)}^{-1}(\phi^{-1}(Y))) = \phi(J_{v}^{-1}(X))$ by our definitions, and on the left we see $J_w^{-1}(Y) = J_{\phi(v)}^{-1}(\phi(X))$. This new inclusion is \[J_{\phi(v)}^{-1}(\phi(X))\subseteq \phi(J_{v}^{-1}(X))\] and applying $\phi^{-1}$ to both sides of this new inclusion gives the desired result.
\end{proof}

We now apply the development of $J_v$ to prove our main  theorems.

\section{Our Main Theorems}

\subsection{Double-Arch Polygonal Sections}

\begin{definition}[Double-Arch Polygon]
Let $v, v'\in \R^2$ be on the $x$-axis. Let $\ell, \ell'$ be perpendicular to the $x$-axis so that $\ell$ passes through $v$ and $\ell'$ passes through $v'$. A \textit{polygonal arch} is a polygon in $\R^2$ constructed as the convex hull of $v$ and $v'$, as well as a finite set of other vertices $W$, where each $w\in W$ is strictly above the $x$-axis and strictly between the two lines $\ell$ and $\ell'$. See Figure~\ref{fig:Polygonal Arch Definition} for an example.

\begin{figure}[h!]
\centering
\includegraphics[width = 0.5\linewidth]{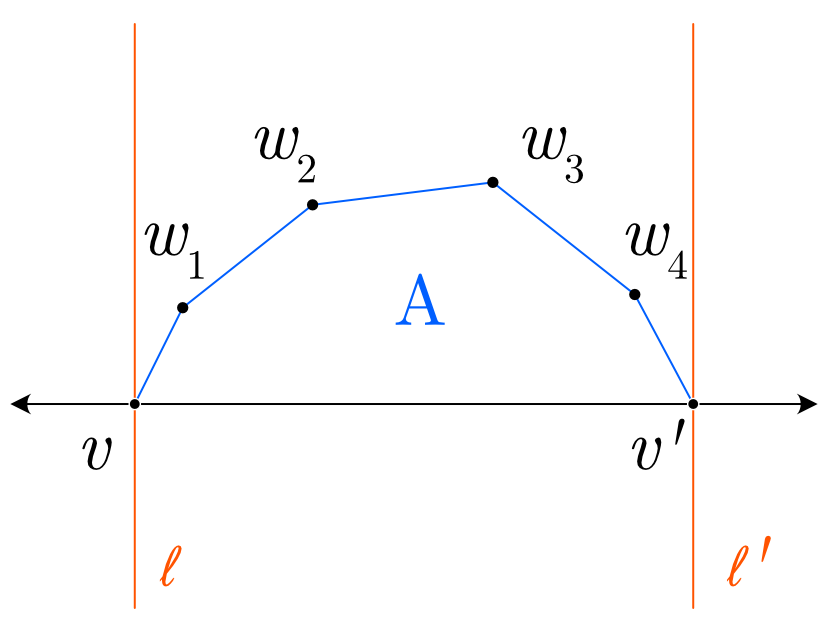}
\caption{An example of a polygonal arch, $A$.}
\label{fig:Polygonal Arch Definition}
\end{figure}

A trivial arch is an arch where the set $W$ is empty. A \textit{double-arch polygon} $P$ is constructed from two arches $A$ and $A'$ which share endpoints $v$ and $v'$ by flipping $A'$ over the $x$-axis and gluing the flipped arch to $A$ at the shared vertices $v$ and $v'$. A double-arch polygon is \textit{nontrivial} if it is constructed from two nontrivial arches $A$ and $A'$.

\end{definition}

Many polygons are nontrivial double-arch, including all the regular polygons besides the triangle; see Figure~\ref{fig:Double-Arch Polygon Examples} for the general pattern for regular polygons.

\begin{figure}[h!]
\centering
\includegraphics[width = 0.6\linewidth]{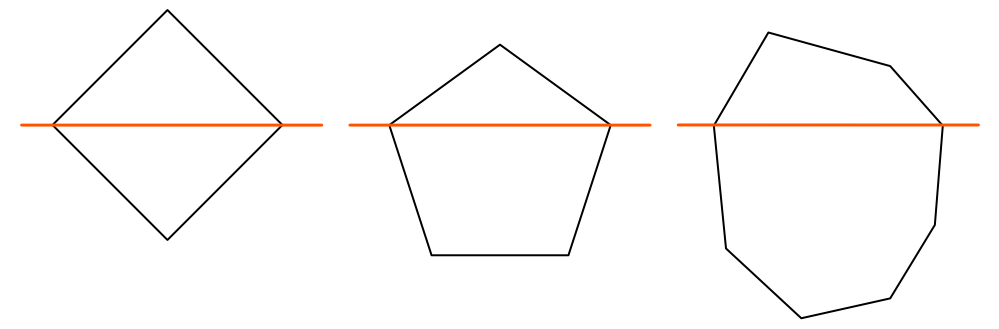}
\caption{The square, regular pentagon, and a less regular polygon are shown to be nontrivial double-arch.}
\label{fig:Double-Arch Polygon Examples}
\end{figure}

Not all polygons are nontrivial double-arch, though, as is shown in Figure~\ref{fig:Double-Arch Nonexample}. The example in this figure is constructed by taking a right-angled triangle, flipping it over the perpendicular bisector to its hypotenuse, and taking the convex hull of the three original vertices and the three vertices post-flip.

\begin{figure}[h!]
\centering
\includegraphics[width = 0.45\linewidth]{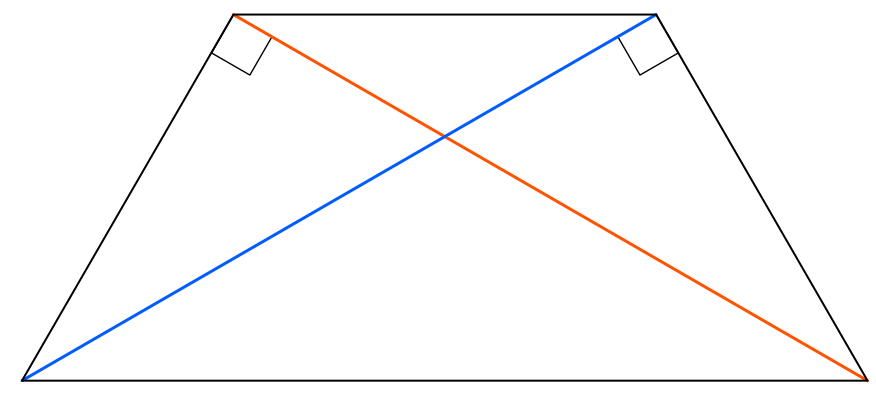}
\caption{A polygon which is not nontrivial double-arch.}
\label{fig:Double-Arch Nonexample}
\end{figure}

\hfill
\begin{theorem}\label{thm:dap}
If a flat polygon $P$ is nontrivial double-arch, then it is locally Rupert.
\end{theorem}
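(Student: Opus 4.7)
The strategy is to construct, for each sufficiently small $\delta > 0$, an axis $a \in S$ making $\rho_a^\delta$ a Rupert rotation of $P$; by Section 3 this reduces to showing $a \in A_w^\delta$ for every vertex $w$ of $P$. I place $P$ in its defining position with $v = (-r, 0, 0)$ and $v' = (r', 0, 0)$ on the $x$-axis and the two nontrivial arches above and below with vertex sets $W$ and $W'$, respectively. My candidate axis has a small out-of-plane component of order $\delta$: write $a = (a_1, a_2, c\delta)$ with $a_2 = \eta$ a small positive constant depending only on $P$, $a_1 = \sqrt{1 - a_2^2 - c^2\delta^2}$ close to $1$, and $c$ a parameter chosen below.

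Using Rodrigues' formula, I expand $\pi(\rho_a^\delta(w))$ through order $\delta^2$ for each vertex $w$. At $v$ (and symmetrically at $v'$), two contributions to the $y$-displacement---one from $a_3 = c\delta$ paired with $\sin\delta$, and one from $a_1 a_2(1-\cos\delta)$---combine so that the motion slope is $-(2c + a_1 a_2)/a_2^2$. Choosing $c = -a_1 a_2/2$ makes the slope vanish, so $v$ and $v'$ drift horizontally along the $x$-axis into the open segment $(v, v')$, which lies in $\intr(P)$ by nontriviality of both arches (each arch is strictly above or below the $x$-axis on $(-r, r')$).

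For any vertex $w = (w_x, w_y, 0)$ with $w_y \neq 0$, the same choice of $c$ produces a motion whose limiting direction as $a_2 \to 0$ is $(0, -1)$ if $w \in W$ and $(0, 1)$ if $w \in W'$, with quantitative bound $|\Delta x_w / \Delta y_w| = O(a_2)$. A convexity argument then shows the limiting vertical direction lies in the strict interior of the tangent cone at every off-axis vertex: for $w \in W$ with right and left neighbors $w_+, w_-$ in $A$, writing $(0, -1) = \lambda(w_+ - w) + \mu(w_- - w)$ gives a $2 \times 2$ linear system whose solution has $\lambda, \mu > 0$ because the cross product $(w_+ - w) \times (w - w_-)$ is strictly positive by the CCW convexity of $A$. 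The symmetric argument places $(0, 1)$ in the strict interior of the tangent cone at every $w \in W'$. By openness of these cones and finiteness of the vertex set, there is $\eta_0 > 0$ depending only on $P$ such that for every $\eta \in (0, \eta_0)$ the motion direction at every off-axis vertex lies in the corresponding tangent cone interior.

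Fixing such an $\eta$, for all sufficiently small $\delta > 0$ the axis $a$ above lies in $\bigcap_w A_w^\delta$, so $\rho_a^\delta$ is a Rupert rotation of magnitude $\delta$, proving $P$ is locally Rupert. The main obstacle is the uniform convexity argument at the off-axis vertices: showing that the vertical direction sits in the strict interior of every tangent cone at once, so that a single small $\eta$ works across all of $W \cup W'$. This is where the full strength of the nontriviality hypothesis enters---ensuring $W$ and $W'$ are nonempty, that the $x$-axis segment between $v$ and $v'$ lies in $\intr(P)$, and that the cross-product sign argument applies at each off-axis vertex.
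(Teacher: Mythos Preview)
Your approach is correct and takes a genuinely different route from the paper's. Both proofs look for an axis close to the direction of the segment $vv'$: rotation about such an axis moves each off-axis vertex essentially perpendicular to $vv'$, i.e., toward the $x$-axis and hence into $\intr(P)$. The delicate point is the two endpoints $v,v'$, which to first order do not move under such a rotation, so one must tilt the axis slightly to coax them into the interior while keeping the off-axis vertices under control.

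The paper handles the endpoints via the $J_v$ machinery of Section~3. After translating so that $v$ and $v'$ are antipodal on $S_v$, it shows---using Lemma~\ref{lem:jsfr} together with the half-turn $\phi$ about the positive pole---that the set $F=J_v^{-1}(R)$ of axes sending $v$ onto the great circle $R$ over the segment $r$ coincides with the corresponding set $F'=J_{v'}^{-1}(R)$ for $v'$. Since $F$ meets every small circle $C_d$ about $v$, one then finds an axis in $F\cap\mathcal A^\delta$, where $\mathcal A^\delta$ is the open neighbourhood of $v$ that already handles all off-axis vertices.

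You instead bypass Section~3 entirely: you write down an explicit family of axes $(a_1,\eta,c\delta)$ and use a second-order Rodrigues expansion to tune $c=-a_1\eta/2$ so that the $O(\delta^2)$ term in the $y$-displacement at \emph{both} $v$ and $v'$ vanishes simultaneously, leaving an inward $x$-displacement of order $\eta^2\delta^2$. Your tangent-cone argument at the off-axis vertices is equivalent to the paper's observation that the perpendicular from $w$ to the $x$-axis lands in $\intr(P)$ (and no edge of $P$ can be vertical, since the supporting line $x=x_0$ would have to separate $v$ from $v'$), just phrased analytically. What you gain is a self-contained, elementary argument that does not need the origin at the midpoint of $vv'$ or any of the $J_v$ apparatus; what the paper's symmetry argument buys is a conceptual explanation of \emph{why} a single tilt works for both endpoints at once, rather than a verification by computation.
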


\begin{proof}
Let $r$ be the segment of the $x$-axis between the endpoints $v$ and $v'$ of $P$. Since $P$ is nontrivial, the line segment $r$ is in the interior of $P$ at all points other than $v$ and $v'$. Translate $P$ so that the origin bisects $r$. Let $S_v$ be the unique sphere in $\R^3$ centered at the origin and passing through $v$. Since we translated $P$ so that the origin is the bisector of $r$, the distance from the origin to $v$ is the same as the distance from the origin to $v'$, and thus $v'\in S_v$. Furthermore, by construction $v$ and $v'$ are antipodes on $S_v$.

For the following, see Figure~\ref{fig:v is Allowable for w}. For every vertex $w$ of $P$ other than $v$ and $v'$, and for sufficiently small $\delta>0$, $A_w^\delta$ contains $v$. This can be seen by constructing the unique line $m$ through $w$ perpendicular to to $r$, and letting the intersection of $m$ and $r$ be $x$. The point $x$ is inside $P$, as is $w$, so the straight line between them is contained within $P$ by convexity; furthermore, all the points of $m$ besides $w$ are in the interior of $P$. For sufficiently small $\delta>0$, $\pi(\rho_v^\delta\cdot w)\in m$ and since $\rho_v^\delta$ isn't the identity, $\pi(\rho_v^\delta\cdot w)\not = w$, so $\pi(\rho_v^\delta\cdot w)\in\intr(P)$.

\begin{figure}[h!]
\centering
\includegraphics[width = 0.5\linewidth]{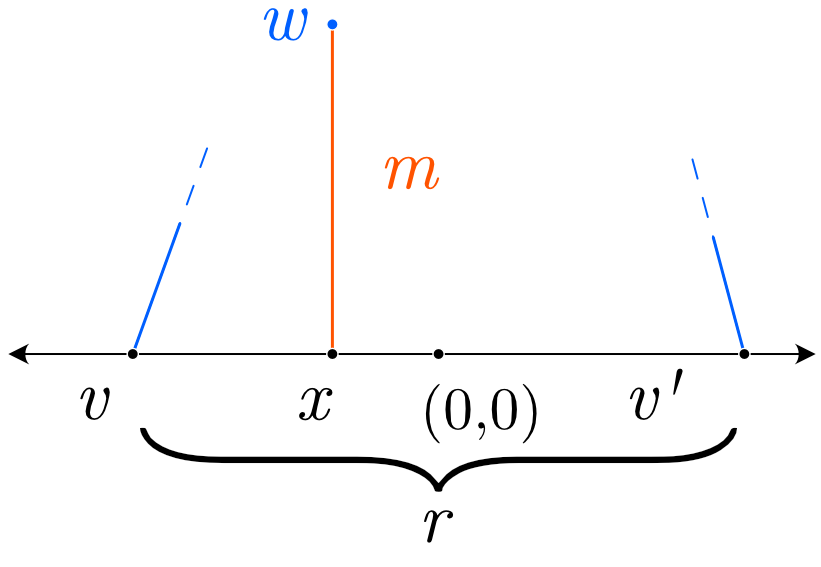}
\caption{A vertex $w$ of a double-arch polygon and the line $m$ which $\pi(w)$ travels along when rotating about the axis pointing at $v$.}
\label{fig:v is Allowable for w}
\end{figure}

Let $\mathcal A^\delta = \bigcap\limits_{w\in P\setminus\{v, v'\}} A_w^\delta$, which is open since the indexing set is finite and each component set is open. Since $v$ is in each component set of this intersection, $v\in \mathcal A^\delta$. If $A_v^\delta$ intersects $A_{v'}^\delta$ in $\mathcal A^\delta$, say at an axis $a\in \mathcal A^\delta$, then $a$ is in every allowable set simultaneously and thus $\rho_a^\delta$ is a Rupert rotation. If we can produce such an axis $a$ for all $\delta$ small enough, that shows that $P$ is locally Rupert.

We claim that such an intersection happens at an axis which sends $\pi(v)$ along $r$.  Let $R$ be the intersection of $S_v$ with the preimage $\pi^{-1}(r)$. See Figure~\ref{fig:R as the Preimage of r}. $R$ is a great circle by construction, and is contained within $T_v$ and $T_{v'}$ at all points other than $v$ and $v'$, since $r$ is in the interior of $P$ everywhere but $v$ and $v'$. 

\begin{figure}[h!]
\centering
\includegraphics[width = 0.5\linewidth]{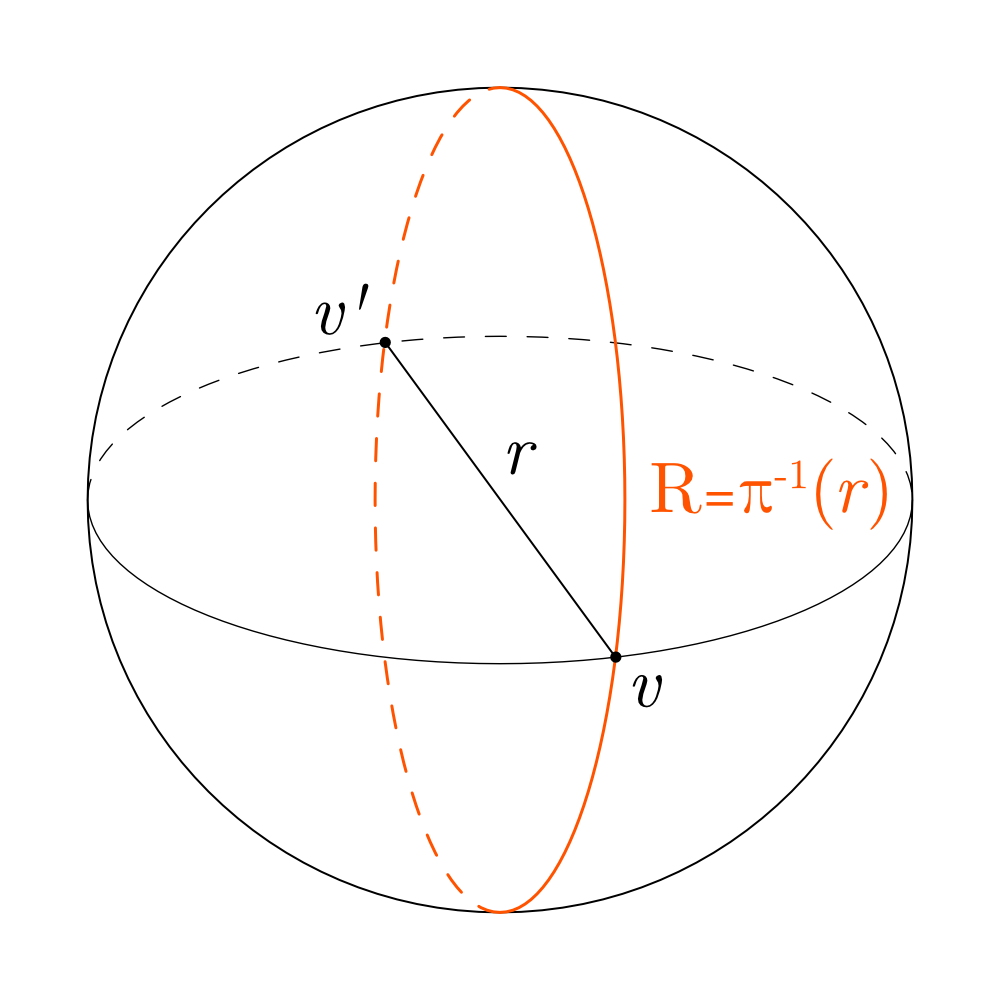}
\caption{The line $R$, shown as the preimage on the sphere of $r$.}
\label{fig:R as the Preimage of r}
\end{figure}

Let $\phi$ be the rotation of $S_v$ about its positive pole taking $v$ to $v'$ (which in this case will be rotation by $\pi$). Let $\varsigma\colon S_v\to S_v$ be the antipodal map. Clearly $\phi(R) = R$ and $\varsigma(R) = R$, as $R$ is a great circle orthogonal to the equator. 

Let $F = J_v^{-1}(R)$ and $F' = J_{v'}^{-1}(R)$. We claim that $F = F'$.

We first claim that $\phi(F) = F$. We'll prove this by showing that $\phi(F)\subseteq F$. Since $\phi$ is an involution, this also shows that $F\subseteq\phi(F)$ and completes our equality. An axis $f\in F$ if and only if $\rho_f^\delta(v)\in R$. Let $f\in F$. We claim that $\phi(f)\in F$, i.e. that $\rho_{\phi(f)}^\delta(v)\in R$. We can write $\rho_{\phi(f)}^\delta(v)$ as a conjugation by the SO(3) conjugation rule, Theorem~\ref{thm:scr}: \[\rho_{\phi(f)}^\delta(v) = \phi(\rho_f^\delta(\phi^{-1}(v))) = \phi(\rho_f^\delta(v'))\] Rotations preserve antipodes, and so since $\rho_f^\delta$ takes $v$ onto $R$, it must take the antipode $v'$ onto the antipode $\varsigma(R)$, which by the above is just $R$. Thus $\rho_f^\delta(v')\in R$, and since $\phi(R) = R$, therefore $\rho_{\phi(f)}^\delta(v) = \phi(\rho_f^\delta(v'))\in R$ as desired.

By Lemma 6, $J_v^{-1}(R) = \phi^{-1}(J_{\phi(v)}^{-1}(\phi(R)))$. On the left we get $F$. On the right, since $\phi(v) = v'$ and $\phi(R) = R$, we get $\phi^{-1}(F')$. Thus $F = \phi^{-1}(F')$, and by the above, $\phi(F) = F$, so applying $\phi$ we see $F = F'$ as desired.

As we've covered, $R$ is contained within the regions $T_{v}$ and $T_{v'}$. By definition, then, $F\subseteq A_{v}^\delta$, since $F$ is the set of axes taking $v$ into $R\subseteq T_v$, and similarly $F'\subseteq A_{v'}^\delta$. Furthermore, the points of $F$ are $j^{-1}_{v;d}(x)$ for any $d$ and a point $x \in C_{t(d)}\cap R$. This intersection is never empty, since $R$ passes through $v$, and so for all $d$ there is a point of $F$ on $C_d$. 

Since $F = F'$, $F\subseteq A_{v}^\delta\cap A_{v'}^\delta$. Hence $A_v^\delta$ intersects $A_{v'}^\delta$ at $F = F'$ on every circle $C_d$ centered at $v$. Since $v\in \mathcal A^\delta$ and $\mathcal A^\delta$ is open, by taking small enough $d$, we get that $C_d\subseteq \mathcal A^\delta$. Hence, for this $d$, $A_v^\delta$ intersects $A_{v'}^\delta$ on $F\cap C_d\subseteq\mathcal A^\delta$ for all sufficiently small $\delta>0$, showing that $P$ is locally Rupert.
\end{proof}

From here, we can use Lemma~\ref{lem:psbs} to prove the corollary

\begin{theorem*}[A]
If a polyhedron $Q$ has a double-arch polygonal section $P$, then $Q$ is locally Rupert.\qed
\end{theorem*}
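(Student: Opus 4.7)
The plan is to present this as an immediate corollary obtained by chaining together two results already proved in the paper. The hypothesis gives us a nontrivial double-arch polygonal section $P$ of $Q$, and the two building blocks I need are Theorem~\ref{thm:dap} (a nontrivial double-arch flat polygon is locally Rupert) and Lemma~\ref{lem:psbs} (the Polygonal Section Bootstrap, which lifts the locally Rupert property from a polygonal section up to the ambient polyhedron).

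First I would invoke Theorem~\ref{thm:dap} on $P$. Since $P$ is by assumption a nontrivial double-arch flat polygon, we conclude directly that $P$ is locally Rupert, i.e.\ for every $\eps > 0$ there is a Rupert rotation $\rho$ for $P$ with $|\rho| < \eps$. No additional spherical analysis or construction is needed here since all of that work has already been carried out in Section~4.

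Next I would apply Lemma~\ref{lem:psbs} to the pair $(Q,P)$. The lemma requires exactly that $P$ be a polygonal section of $Q$ (given by hypothesis) and that $P$ be locally Rupert (just established). Its conclusion is that $Q$ is locally Rupert, which is exactly what we want. Since the Rupert property follows from the locally Rupert property by definition (every locally Rupert polyhedron is Rupert, as noted immediately after the definition in Section~2), we also obtain that $Q$ is Rupert, matching the statement of Theorem~A in the introduction.

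There is no real obstacle: the content of Theorem~A lies entirely in the two ingredients. Theorem~\ref{thm:dap} is the geometric heart of the argument (using the axis sending $\pi(v)$ along $r$ and the symmetry $F=F'$), while Lemma~\ref{lem:psbs} is the soft bootstrap that lets vertices off the section drift slightly without leaving $\intr(\pi(Q))$ under a small rotation. Our proof of Theorem~A is therefore just the one-line composition of these two facts.
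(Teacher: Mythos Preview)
Your proposal is correct and matches the paper's own argument exactly: the paper states Theorem~A immediately after Theorem~\ref{thm:dap} with nothing but the sentence ``From here, we can use Lemma~\ref{lem:psbs} to prove the corollary'' and a \qed. Your observation that the hypothesis should read \emph{nontrivial} double-arch (as in the introduction) is also right, since Theorem~\ref{thm:dap} only applies in that case.
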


\subsection{Prisms Over Polygons}

It is somewhat unsatisfying that Theorem~\nameref{thm:mainA} does not handle the case of the Cube, despite that polyhedron's historical importance. This dissatisfaction leads us to think about how to extend the above theorem. Above, we sought a Rupert rotation, a rotation $\rho$ so that $\pi(\rho(P))\subseteq \intr(\pi(P))$. This rotation, from the perspective of $\pi$, ``shrinks'' $P$. We could just as easily seek a rotation $\sigma$ so that $\pi(P)\subseteq \intr(\pi(\sigma(P)))$. This rotation would ``expand'' $P$. Call such a rotation \textit{reverse Rupert}. The definitions for a reverse Rupert polyhedron and a locally reverse Rupert polyhedron follow the analogous definitions for Rupert and locally Rupert.

By taking the local perspective, we can again simplify our problem. 

\begin{definition}[Prism over a Polygon]
Let $P$ be a polygon embedded in $\R^3$ on the $xy$-plane, i.e. a flat polygon. A \textit{prism over $P$}, say $R$, is a polyhedron obtained from $P$ by taking the set $\{(x, y, z)\in \R^3 | (x, y)\in P, -h \leq z \leq h\}$ for some $h>0$, which functionally just extends $P$ vertically by $h$ above and below the $xy$-plane.
\end{definition}

See Figure~\ref{fig:Prism Section Definition} for the following definition.

\begin{figure}[h!]
\centering
\includegraphics[width = 0.5\linewidth]{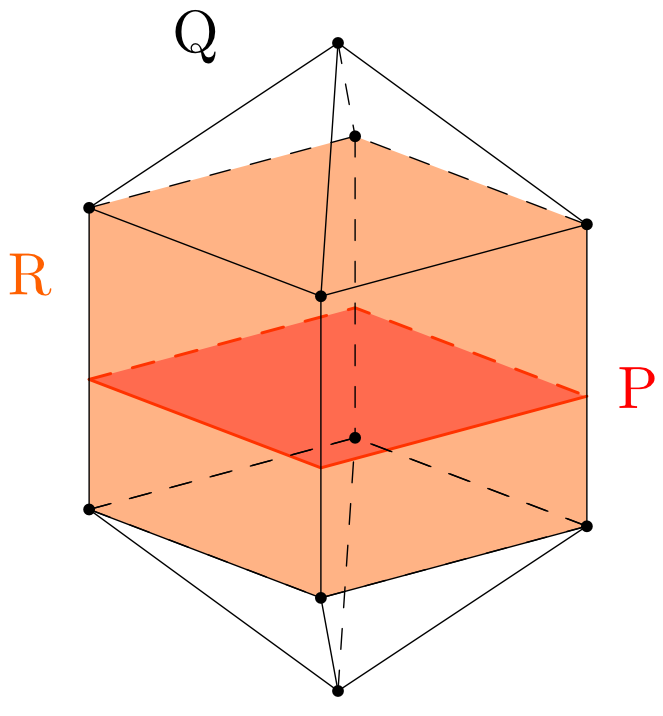}
\caption{An example of a prism section $R$ over $P$, where $P$ is the square.}
\label{fig:Prism Section Definition}
\end{figure}

\begin{definition}[Prism Section]
Let $L_h = \{(x, y, z)\in \R^3 | -h \leq z \leq h\}$. Let $Q$ be a polyhedron. A \textit{prism section} of $Q$ is a prism $R$ over a polygon $P$ so that $R$ is the intersection of $Q$ and $L_h$ for some $h>0$.
\end{definition}

In analogy with polygonal sections, we see that $\pi(Q) = \pi(R) = P$, since every vertex of $Q$ must lie in $\pi^{-1}(\pi(R))$: if a vertex were to lie outside of $\pi^{-1}(\pi(R))$, by taking the convex hull of that vertex and $R$ we would see that $Q$ intersects $L_h$ in more than just $R$.

To complete the analogy to polygonal sections, we need a prism section version of Lemma~\ref{lem:psbs}, which we now provide.

\begin{lemma}[Prism Section Bootstrap]\label{lem:prsbs}
Let $Q$ be a polyhedron with a prism section $R$. If $R$ is locally reverse Rupert, so is $Q$.
\end{lemma}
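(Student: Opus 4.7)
The plan is to observe that, in contrast with Lemma~\ref{lem:psbs}, this bootstrap requires no neighborhood-of-the-identity argument: every reverse Rupert rotation for $R$ is automatically a reverse Rupert rotation for $Q$. The essential ingredients are already on the table, namely $\pi(Q) = \pi(R) = P$ (established in the paragraph preceding the lemma by observing that every vertex of $Q$ must lie in $\pi^{-1}(\pi(R))$) and the inclusion $R \subseteq Q$.

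Concretely, given $\eps > 0$, I would choose a rotation $\rho$ with $|\rho| < \eps$ such that $\pi(R) \subseteq \intr(\pi(\rho(R)))$, using that $R$ is locally reverse Rupert. Since $R \subseteq Q$, we have $\rho(R) \subseteq \rho(Q)$ and hence $\pi(\rho(R)) \subseteq \pi(\rho(Q))$. For any $A \subseteq B$ in the plane, $\intr(A) \subseteq \intr(B)$, since an open ball around a point of $\intr(A)$ already sits inside $A \subseteq B$. Chaining these facts with $\pi(Q) = \pi(R)$ gives
\[\pi(Q) = \pi(R) \subseteq \intr(\pi(\rho(R))) \subseteq \intr(\pi(\rho(Q))),\]
so the same $\rho$ witnesses that $Q$ is locally reverse Rupert.

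The asymmetry with Lemma~\ref{lem:psbs} is worth highlighting, since it explains why this proof is so much shorter. In that lemma, additional vertices of $Q$ lying outside $P$ could push $\pi(\rho(Q))$ outside of $\intr(\pi(Q))$, which is why one needs an open neighborhood of the identity small enough to keep those projections inside $\intr(\pi(Q))$. Here the analogous extra vertices can only make $\pi(\rho(Q))$ larger, which is exactly the direction the reverse Rupert inclusion needs, so no such obstruction exists and the proof reduces to monotonicity of interior under inclusion. I do not anticipate any real obstacle; the only thing to be careful about is not to confuse the direction of the inclusion $\pi(\rho(R)) \subseteq \pi(\rho(Q))$ with its reverse, which would be the false statement needed if one tried to run the same argument for the ordinary (non-reverse) Rupert property.
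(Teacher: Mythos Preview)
Your proof is correct and follows essentially the same argument as the paper: pick a reverse Rupert rotation for $R$ of size less than $\eps$, then use $\pi(Q)=\pi(R)$ together with $R\subseteq Q$ (hence $\intr(\pi(\rho(R)))\subseteq\intr(\pi(\rho(Q)))$) to chain the inclusions. Your additional remark contrasting this with Lemma~\ref{lem:psbs} is accurate and nicely explains the asymmetry.
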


\begin{proof}
Let $\eps>0$ be given. We wish to show that there is a rotation $\sigma \in SO(3)$ with $|\sigma|<\eps$ so that $\pi(Q)\subseteq \intr(\pi(\sigma(Q)))$. By the locally reverse Rupert property of $R$, there exists $\sigma\in SO(3)$ with $|\sigma|<\eps$ so that $\pi(R)\subseteq \intr(\pi(\sigma(R)))$. Since $R\subseteq Q$, $\intr(\pi(\sigma(R))) \subseteq \intr(\pi(\sigma(Q)))$ and hence \[\pi(Q) = \pi(R) \subseteq \intr(\pi(\sigma(R))) \subseteq \intr(\pi(\sigma(Q)))\text{.}\] 
\end{proof}

We now provide a result which extends Theorem~\ref{thm:dap}.

\begin{theorem}\label{thm:prp}
Let $R$ be a prism over $P$. If $P$ is locally Rupert, then $R$ is locally reverse Rupert.
\end{theorem}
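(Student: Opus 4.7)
The plan is to exhibit an explicit reverse Rupert rotation for $R$ of the form $\sigma = \rho^{-1}$, where $\rho$ is a Rupert rotation for the base polygon $P$. The geometric intuition is that tilting the prism $R$ slightly causes its top and bottom faces to project outside the original footprint $P = \pi(R)$, thereby enlarging the shadow; the vertical thickness of $R$ is exactly the slack needed to accommodate a small rotation of $P$ without spilling out the top or bottom.

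Given $\eps > 0$, I would first use the local Rupert hypothesis for $P$ to select a Rupert rotation $\rho$ whose angle $|\rho|$ is less than $\eps$, but also small enough relative to the prism's half-height $h$ and the maximum distance $M = \max_{p \in P}\|p\|$ from the origin to a point of $P$ to ensure $\rho(P) \subseteq \intr(R)$. The horizontal part of this containment is precisely the Rupert condition $\pi(\rho(P)) \subseteq \intr(P)$; the vertical part follows from the standard estimate $|\rho(p) - p| \leq |\rho|\cdot\|p\| \leq |\rho| M$ for rotations about the origin, which caps the $z$-coordinate of any point of $\rho(P)$ strictly below $h$ once $|\rho| < h/M$.

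Applying $\rho^{-1}$ to both sides gives $P \subseteq \rho^{-1}(\intr(R))$, and projecting with $\pi$ then gives $\pi(R) = P \subseteq \pi(\rho^{-1}(\intr(R)))$. Since $\rho^{-1}(\intr(R))$ is open in $\R^3$ and $\pi$ is an open map, $\pi(\rho^{-1}(\intr(R)))$ is an open subset of the $xy$-plane contained in $\pi(\rho^{-1}(R))$, hence contained in $\intr(\pi(\rho^{-1}(R))) = \intr(\pi(\sigma(R)))$. This is exactly the reverse Rupert condition for $R$ at $\sigma = \rho^{-1}$, whose angular size $|\sigma| = |\rho| < \eps$.

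There is no real obstacle: the argument is essentially a bookkeeping exercise that unpacks the definitions and exploits the positive height of the prism to absorb the small vertical motion introduced by the rotation. The only care required is the quantitative matching between $|\rho|$ and $h$, which is painless because local Rupertness supplies rotations of arbitrarily small angular size. Combined with the Prism Section Bootstrap (Lemma~\ref{lem:prsbs}), Theorem~\nameref{thm:mainB} will then follow directly.
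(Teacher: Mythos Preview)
Your proof is correct and follows essentially the same strategy as the paper: pick a sufficiently small Rupert rotation $\rho$ for $P$, check that $\rho(P)\subseteq\intr(R)$ by using the prism height $h$ to absorb the vertical displacement, then set $\sigma=\rho^{-1}$ and push the interior through $\pi$. The only difference is cosmetic: where the paper controls the vertical motion of each vertex via a small disc $D_v$ on the sphere $S_v$, you use the one-line estimate $|\rho(p)-p|\leq|\rho|\cdot\|p\|$, which is a bit more direct but accomplishes the same bound.
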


\begin{proof}
In order to show that a rotation $\sigma$ is reverse Rupert, we need to show that $\pi(R)\subseteq \intr(\pi(\sigma(R)))$. The set on the left, $\pi(R)$, is just $P$ by construction. The set on the right is convex since it is the interior of a projection of a convex set $\sigma(R)$, so to show that $P\subseteq \intr(\pi(\sigma(R)))$, it suffices to show that each vertex $v\in P$ is contained within $\intr(\pi(\sigma(R)))$. Let $\eps>0$ be given. We will construct a reverse Rupert rotation $\sigma$ with $|\sigma|<\eps$.

Let $v$ be a vertex of $P$, and let $S_v$ be the unique sphere centered at the origin and passing through $v$. Consider the intersection $S_v\cap R$, shown in Figure~\ref{fig:Prism Intersects Tangent Sphere}. This is a somewhat complex shape.
\begin{figure}[h!]
\centering
\includegraphics[width = 0.5\linewidth]{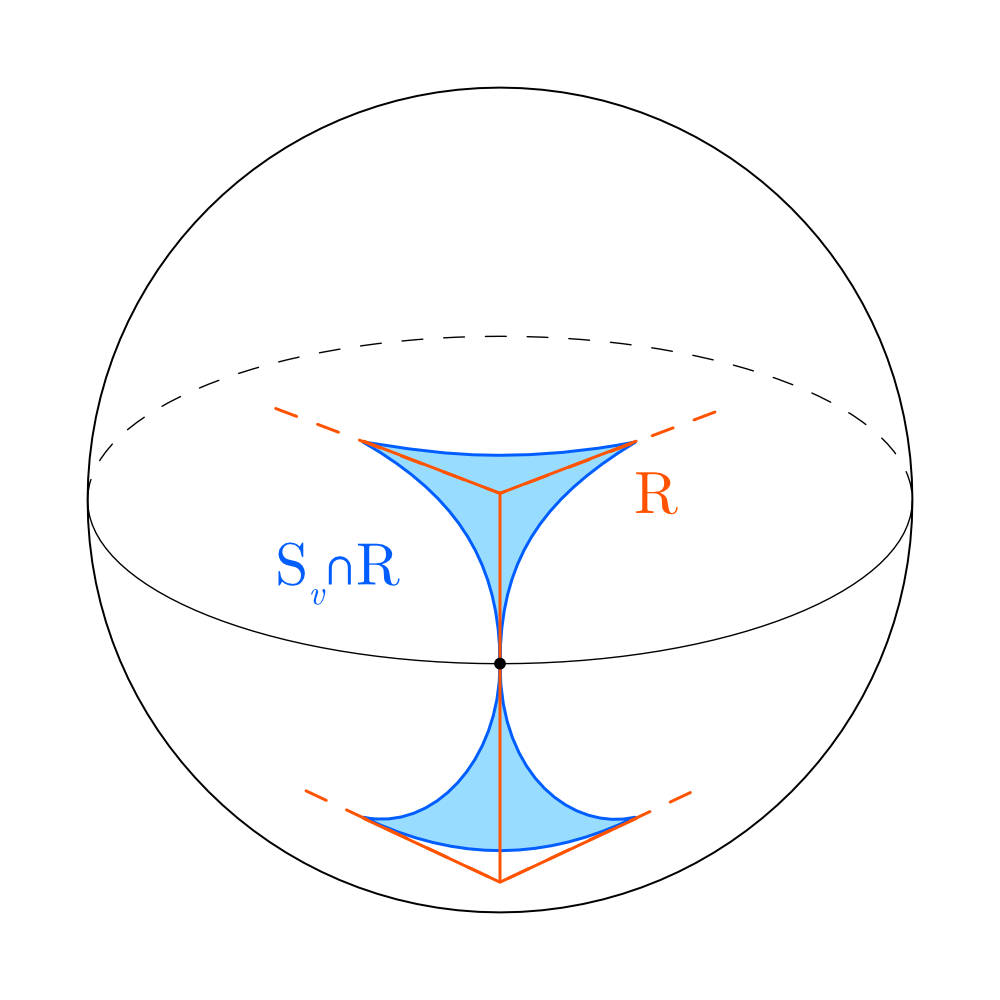}
\caption{The intersection $S_v\cap R$.}
\label{fig:Prism Intersects Tangent Sphere}
\end{figure}

Notice, though, that $v$ lies on a vertical edge of $R$, say $E$. Since every point of $E$ is further from the origin than $v$, $S_v$ is tangent to $E$ with point of tangency $v$. Thus, since $h>0$, near $v$, the intersection $S_v\cap \intr(R)$ looks like the intersection $S_v \cap (\pi^{-1}(\intr(P))) = T_v$, an example of which was shown in Figure~\ref{fig:Tv}.

In more careful terms, we know that 
\begin{enumerate} 
\item $\intr(R)\subseteq \pi^{-1}(\intr(P))$, and
\item There is a small enough disc on $S_v$ centered at $v$, say $D_v$, so that every point $(x,y,z)\in D_v$ has $|z|<h$.
\end{enumerate}

Every point $(x,y,z)\in (D_v\cap \pi^{-1}(\intr(P)))$ has $|z|<h$, so since $(x,y,z)\in \pi^{-1}(\intr(P))$ with $|z|<h$, $(x,y,z)\in \intr(R)$. Hence, $(D_v\cap \pi^{-1}(\intr(P)))\subseteq \intr(R)$. 

Furthermore, $D_v\cap \pi^{-1}(\intr(P)) = D_v\cap \pi^{-1}(\intr(P)) \cap S_v$ since $D_v\subseteq S_v$, but since $\pi^{-1}(\intr(P)) \cap S_v = T_v$, $D_v\cap \pi^{-1}(\intr(P)) = D_v\cap T_v$. Combining these two facts we get \[D_v\cap T_v\subseteq \intr(R)\text{.}\]

By restricting $\gamma>0$ small enough, for any rotation $\rho$ with $|\rho|<\gamma$, $\rho\cdot v$ can be guaranteed to stay in a small disc centered at $v$ on $S_v$. Let $\gamma_v$ be given small enough to keep $\rho\cdot v$ inside $D_v$. Let $\alpha$ be the minimum over $\eps$ and $\gamma_w$ for all vertices $w\in P$.

By the locally Rupert property of $P$, there exists a rotation $\rho$ with $|\rho| = \delta$, $\delta<\alpha$ so that, for all vertices $v\in P$, $\pi(\rho\cdot v)\in \intr(\pi(P))$ and hence $\rho\cdot v\in T_v$. If we imagine keeping $R$ stationary and rotating $P$ by $\rho$, since $\delta< \gamma_v$ for all vertices $v\in P$, we know that $\rho\cdot v\in T_v\cap D_v\subseteq \intr(R)$ for all $v$. Taking the convex hull of these vertices on the left, we get that $\rho(P) \subseteq \intr(R)$. By applying the inverse of $\rho$, say $\sigma$, to both $\rho(P)$ and $\intr(R)$, we get that $P\subseteq \sigma(\intr(R))$. Furthermore, by applying $\pi$, since $\pi(P) =P$ we see that $P\subseteq \pi(\sigma(\intr(R)))$, but the interior operator commutes over $\sigma$ and $\pi$ and thus $P\subseteq \intr(\pi(\sigma(R)))$, as desired. Since $\sigma$ has $|\sigma| = |\rho| < \alpha < \eps$, this shows that $R$ is locally reverse Rupert.%be careful about this. \rho(P) is inside R, so \sigma(\rho(P)) is inside \sigma(R)
\end{proof}

Again, we can apply our bootstrap, Lemma~\ref{lem:prsbs}, to show that if a polyhedron $Q$ has a prism section $R$ over $P$ and $P$ is locally Rupert, then $Q$ is locally reverse Rupert. Combining this with Theorem~\ref{thm:dap}, we get the corollary

\begin{theorem*}[B]
If a polyhedron $Q$ has a prism section $R$ over $P$ and $P$ is nontrivial double-arch, then $Q$ is locally reverse Rupert.\qed
\end{theorem*}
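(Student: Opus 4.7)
The plan is to chain three already-established results: Theorem~\ref{thm:dap} (nontrivial double-arch $\Rightarrow$ locally Rupert for flat polygons), Theorem~\ref{thm:prp} (prism over a locally Rupert polygon $\Rightarrow$ locally reverse Rupert), and Lemma~\ref{lem:prsbs} (prism section bootstrap for the reverse Rupert property). Each of these has been proved in full above, so the work here is purely a composition of hypotheses.

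First I would invoke Theorem~\ref{thm:dap} on the hypothesis that $P$ is nontrivial double-arch to conclude that $P$ itself is locally Rupert. Next, since by hypothesis $R$ is a prism over $P$, I apply Theorem~\ref{thm:prp} to the locally Rupert polygon $P$ and its prism $R$ to conclude that $R$ is locally reverse Rupert. Finally, since $R$ is a prism section of $Q$, Lemma~\ref{lem:prsbs} upgrades this to the statement that $Q$ is locally reverse Rupert. The final ``hence Rupert'' clause is immediate: a locally reverse Rupert orientation $Q$ admits, for arbitrarily small $\eps$, a rotation $\sigma$ with $|\sigma|<\eps$ and $\pi(Q)\subseteq \intr(\pi(\sigma(Q)))$, so choosing any such $\sigma$ and setting $Q'=\sigma(Q)$ witnesses the Rupert property via $\pi(Q)\subseteq\intr(\pi(Q'))$.

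There is no genuine obstacle in this step since all the nontrivial content sits in Theorems~\ref{thm:dap} and~\ref{thm:prp} and Lemma~\ref{lem:prsbs}; the only thing to be careful about is the direction of containment. In Theorem~\ref{thm:dap} we shrink $P$ under projection, but this is exactly what Theorem~\ref{thm:prp} is designed to convert into the expanding behavior needed for reverse Rupert (the inverse rotation is taken in the final line of that proof). Once that is in hand, Lemma~\ref{lem:prsbs} is purely a set-theoretic consequence of $R\subseteq Q$ together with $\pi(R)=\pi(Q)$, so it extends the expansion from $R$ to all of $Q$ without requiring any new geometric argument. Thus the entire statement reduces to a two-line citation chain, and the ``proof'' amounts to little more than naming these three results in the correct order.
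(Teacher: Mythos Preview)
Your proposal is correct and matches the paper's own proof essentially verbatim: the paper derives Theorem~B as an immediate corollary by chaining Theorem~\ref{thm:dap}, Theorem~\ref{thm:prp}, and Lemma~\ref{lem:prsbs} in exactly the order you describe. Your additional remark about the ``hence Rupert'' clause is a harmless elaboration of something the paper leaves implicit.
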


We now return to the usual convention that ``polyhedron,'' when used alone, means ``unoriented polyhedron.'' Theorems \nameref{thm:mainA} and \nameref{thm:mainB} give a pleasing symmetry in some important cases, since if we take the dual of a polyhedron with a polygonal section $P$, we'll get a prism section over the dual of $P$. Regular polygons are self-dual, so we get the ``duality'' corollary below.

\begin{corollary}\label{cor:dual}
A polyhedron $Q$ has a regular Rupert polygonal section that shows that $Q$ is locally Rupert by Theorem~\nameref{thm:mainA} if and only if the dual $D$ of $Q$ has a prism section over a Rupert regular polygon, which shows that $D$ is locally reverse Rupert by Theorem~\nameref{thm:mainB}.\qed
\end{corollary}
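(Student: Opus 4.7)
The plan is to invoke polar duality. After translating so that the regular polygonal section $P$ has its center at the origin, the polar dual $D = \{u \in \R^3 : \langle v, u\rangle \leq 1 \text{ for every } v \in Q\}$ is a convex polyhedron combinatorially dual to $Q$. Each vertex $v$ of $Q$ corresponds to a face $F_v = D \cap \{u : \langle v, u\rangle = 1\}$, whose supporting plane has normal in the direction of $v$. The crucial observation is that when $v$ lies in the $xy$-plane, the supporting plane of $F_v$ is vertical, so $F_v$ is a vertical face of $D$.

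First, I would identify the vertices of $Q$ lying in the $xy$-plane; by the discussion immediately following the definition of polygonal section, these are exactly the vertices of $P$. Since $P$ is a regular $n$-gon centered at the origin, its polar dual $P^\ast$ is also a regular $n$-gon (rotated by $\pi/n$, with reciprocal circumradius). The $n$ vertical faces $F_{v_1}, \ldots, F_{v_n}$ of $D$ therefore line up as the lateral faces of the right prism over $P^\ast$; call this prism $R$.

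Next, I would show that for all sufficiently small $h > 0$, $D \cap L_h = R \cap L_h$, so that $D$ has a prism section over $P^\ast$. This is the one substantive step: the remaining faces of $D$ correspond to vertices of $Q$ strictly above or below the $xy$-plane, so their supporting planes stay at a positive distance from the $xy$-plane within the horizontally bounded region $R$. Choosing $h$ smaller than every such distance ensures that $D \cap L_h$ is cut out only by the vertical faces $F_{v_i}$. Since $P$ is a regular Rupert polygon (equivalently, nontrivial double-arch with $n \ge 4$), so is $P^\ast$, and Theorem~\nameref{thm:mainB} applies to $D$, giving the forward direction.

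The converse is symmetric, because polar duality is an involution up to scaling on convex bodies containing the origin: if $D$ has a prism section over a regular Rupert polygon $P^\ast$, then the vertical faces of $D$ supporting this prism arise by duality from vertices of $Q$ in the $xy$-plane, which form a regular Rupert polygonal section of $Q$ polar-dual to $P^\ast$. The main obstacle throughout is the bookkeeping in the slab argument — verifying that only the vertical faces of $D$ contribute to $D \cap L_h$ for small $h$ — which amounts to a compactness argument on the finitely many non-vertical supporting planes of $D$.
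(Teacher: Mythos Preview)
Your argument is correct and is essentially a fleshed-out version of what the paper does: the paper offers no formal proof at all, only the single sentence preceding the corollary (``if we take the dual of a polyhedron with a polygonal section $P$, we'll get a prism section over the dual of $P$''), and then marks the statement with \qed. Your use of polar duality, the identification of vertical faces with planar vertices, and the slab/compactness argument are exactly the natural way to make that sentence precise, so the approach is the same. One small caveat worth noting for completeness: your polar-duality setup implicitly assumes the origin lies in $\intr(Q)$, which requires $Q$ to have vertices on both sides of the $xy$-plane (ruling out the degenerate case where $P$ is a facet of $Q$); the paper is silent on this point as well.
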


\section{Survey of Results}

Here we offer a survey of the important polyhedra which these theorems prove are locally Rupert or locally reverse Rupert. We do not resolve any unsolved polyhedra in any of the four most important classes --- Platonic, Archimedean, Catalan, and Johnson --- but we do recover many results of previous authors with regards to these polyhedra. These theorems, by virtue of their generality, handle a wide range of less-regular solids; for example, we expect that our theorems can recover much of Theorem 3 from \cite{Steininger2021} in regards to the Johnson solids. 

When trying to spot when Theorem~\nameref{thm:mainA} will be applicable, one should look for a ``seam'' around the polyhedron, then double-check that this seam is, in fact, nontrivial double-arch. Spotting when Theorem~\nameref{thm:mainB} is applicable is slightly more difficult, but one should look for a ring of faces around the polyhedron which all have normal vectors lying in the same plane.

Since polygonal and prism sections are often easy to spot, we will give examples of each on the Octahedron and Cube, then simply list the names of the other polyhedra which we cover out of the Platonic, Archimedean, and Catalan solids. Where possible, we list who first proved that a given polyhedron is Rupert, but we understand that there may be errors in this list. No disrespect is meant to any author by their omission. 

\subsection{Platonic Solids}

\begin{figure}[h!]
\centering
	\begin{subfigure}{.5\textwidth}
	\centering
	\includegraphics[height = .75\linewidth]{Octahedron_Polygonal_Section}
	\caption{A polygonal section for the Octahedron.}
	\label{fig:Octahedron Polygonal Section 2}
	\end{subfigure}%
	\begin{subfigure}{.5\textwidth}
	\centering
	\includegraphics[height = .75\linewidth]{Cube_Prism_Section}
	\caption{A prism section for the Cube.}
	\label{fig:Cube Prism Section 2}
	\end{subfigure}
\caption{}	
\label{fig:Platonic Sections 2}
\end{figure}
The Octahedron is proven locally Rupert by Theorem~\nameref{thm:mainA}, and the Cube is proven locally reverse Rupert by Theorem~\nameref{thm:mainB}. See Figure~\ref{fig:Platonic Sections 2} for the polygonal section and prism section, respectively. These results recover the classical case of the Cube, and the case of the Octahedron as covered in \cite{Scriba1968}. 

In regards to the other Platonic solids: the Tetrahedron has a polygonal section, an equilateral triangle, but that section is trivial double-arch. The Dodecahedron and Icosahedron have neither polygonal nor prism sections. A possible approach for these two, in the local case, is discussed in Subsection~\ref{subsec:apatr}.

\subsection{Archimedean Solids}
Theorem~\nameref{thm:mainA} proves that the Cuboctahedron and Icosidodecahedron are locally Rupert, and Theorem~\nameref{thm:mainB} proves that the Truncated Cube, Truncated Octahedron, Rhombicuboctahedron, Truncated Cuboctahedron, and Truncated Icosidodecahedron are locally reverse Rupert.\footnote{If one wishes to count the Elongated Square Gyrobicupola of Grunbaum as an Archimedean solid, Theorem~\nameref{thm:mainB} handles that case as well.} All of these results are recoveries of previous results: all of these besides the Truncated Icosidodecahedron were done in \cite{Chai2018}, and the Truncated Icosidodecahedron was done in \cite{Steininger2021}. We also cover the infinite class of prisms besides the triangular prism.

These theorems still do not handle the particularly tricky case of the Rhombicosidodecahedron, discussed also in \cite{Steininger2021}. Their results seem to indicate that if this polyhedron is Rupert, then it has very small Nieuwland constant. This might indicate that if it \textit{is} Rupert, then it might have a chance of being Rupert in a ``local'' sense. The fact that our result doesn't cover this polyhedron might then serve as meagre evidence in favor of those authors' conjecture that it is not Rupert.
\subsection{Catalan Solids}

Theorem~\nameref{thm:mainA} proves that the Triakis Octahedron, Triakis Hexahedron, Deltoidal Icositetrahedron, Disdyakis Dodecahedron, and Disdyakis Triacontahedron are locally Rupert. Theorem~\nameref{thm:mainB} proves that the Rhombic Dodecahedron\footnote{This can be very hard to see, but if one looks at the orthogonal projection onto the plane normal to an axis pointing at a vertex of valence three, the ring of faces we are looking for all get projected to straight lines. One can also look at the ``seam'' on the Cuboctahedron and see where the seam ends up after dualization.} and Rhombic Triacontahedron\footnote{Again, look at the orthogonal projection along a vertex of valence five, or trace the seam from the Icosidodecahedron through duality.} are locally reverse Rupert. All of these polyhedra were proven Rupert in \cite{Steininger2021}. We also cover the infinite class of bipyramids besides the triangular bipyramid.

\section{Further Work}

\subsection{Antiprisms and Trapezohedra}
\label{subsec:apatr}

The Archimidean solids contain two infinite classes: the prisms and the antiprisms. Similarly, the Catalan solids contain two infinite classes, duals to the previous two: the bipyramids and trapezohedra. One way to see our two theorems is that we prove that the bipyramids and prisms are locally Rupert and locally reverse Rupert, respectively, and then give ``bootstrap'' lemmas to say that if a polyhedron $Q$ has a section which looks sufficiently similar to a bipyramid or prism, then that polyhedron ``inherits'' from those infinite classes and is locally (reverse) Rupert as well.

There are two infinite classes left, then - the antiprisms and trapezohedra.
We expect that similar ``bootstrap'' lemmas can be proven for these classes, so all that is left is to prove that these classes are locally Rupert and locally reverse Rupert. With such a bootstrap in place, these hypothetical theorems would cover the case of the Dodecahedron and Icosihedron, a pleasing bit of symmetry with the Octahedron and Cube. It is unclear at present whether the technology developed in this paper, in particular that of the $J_v$ function, will be sufficient to do this, but nonetheless we conjecture the following.

\begin{conjecture}
Trapezohedra are locally Rupert and antiprisms are locally reverse Rupert. Furthermore, there are ``bootstrap'' lemmas for these infinite classes, showing that every polyhedron $Q$ with a trapezal polygonal section is locally Rupert and every polyhedron $Q$ with an antiprism section is locally reverse Rupert.
\end{conjecture}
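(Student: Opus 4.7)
The plan is to mirror the base-case-plus-bootstrap structure of Theorems~\ref{thm:dap} and \ref{thm:prp}, together with Lemmas~\ref{lem:psbs} and \ref{lem:prsbs}, for both halves of the conjecture. For the trapezohedron half, I would first orient an $n$-trapezohedron so that its main symmetry axis (joining the two apical vertices) lies along the $x$-axis and then further rotate about this axis so that the $xy$-plane is a plane of reflection symmetry — every right $n$-trapezohedron has antiprismatic symmetry $D_{nd}$, which includes such vertical mirror planes. The intersection of the $xy$-plane with the trapezohedron is then a planar convex polygon containing both apical vertices, and I propose this as the \emph{trapezal polygonal section}. After verifying that it satisfies the definition of polygonal section (checking condition 2 is the main routine step), the next step is to show this polygon is nontrivial double-arch: the two apical vertices at opposite ends of the main axis play the role of $v$ and $v'$ in Theorem~\ref{thm:dap}, while the remaining intersection points with the kite faces and zigzag edges supply the interior vertices of the two arches on either side of the segment $r$. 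Theorem~\nameref{thm:mainA} then immediately yields that any $Q$ with a trapezal polygonal section is locally Rupert.

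For the antiprism half, I would define an \emph{antiprism section} of $Q$ in direct analogy with the prism-section definition, as the intersection of $Q$ with a horizontal slab $L_h$ that is an antiprism $R$ over a base polygon $P$. The aim is then to prove an analog of Theorem~\ref{thm:prp}: if $P$ is locally Rupert, then $R$ is locally reverse Rupert. The principal obstacle, and the step I expect to be hardest, is that the tangent-sphere argument central to the proof of Theorem~\ref{thm:prp} breaks. For a prism, each vertex $v$ of $P$ lies at the midpoint of a vertical edge of $R$ which is tangent to $S_v$ at $v$, so a small disc $D_v$ on $S_v$ about $v$ sits in $\intr(R)$. In an antiprism, all vertices of both the top and bottom $n$-gons lie at the same distance $\sqrt{r^2+h^2}$ from the origin, so $S_v$ meets each slanted edge incident to $v$ as a chord rather than a tangent, and the disc $D_v$ pokes outside of $R$.

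To surmount this obstacle I would replace the pointwise tangency argument with a coordinated one exploiting the fact that the two $n$-gons rotate together. The projection $\pi(R)$ is the convex hull of $\pi(P_{\text{top}}) \cup \pi(P_{\text{bot}})$, where $P_{\text{top}}$ and $P_{\text{bot}}$ are congruent copies of $P$ differing by a rotation of $\pi/n$ in the $xy$-plane. Under a small rotation $\sigma$, both copies are transported rigidly together, so $\pi(\sigma(R))$ differs from $\pi(R)$ by $O(|\sigma|)$; a reverse-Rupert rotation for $P$ (obtained by inverting the rotation guaranteed by local Rupertness of $P$) should pull back to a reverse-Rupert rotation for $R$ provided $|\sigma|$ is restricted small enough that the $h$-dependent vertical tilt of the slanted edges cannot carry any projected vertex outside $\intr(\pi(\sigma(R)))$. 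The needed inequality is a uniform continuity estimate on how far $\pi(\sigma(v))$ deviates from $\sigma|_{xy}(\pi(v))$ as a function of $h$ and $|\sigma|$, and the main technical content of the argument. Once this base case is in hand, the antiprism-section bootstrap is purely formal and mimics Lemma~\ref{lem:prsbs}: from $R \subseteq Q$ and $\pi(Q) = \pi(R)$ we obtain $\pi(Q) = \pi(R) \subseteq \intr(\pi(\sigma(R))) \subseteq \intr(\pi(\sigma(Q)))$, completing the conjecture.
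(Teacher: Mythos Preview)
This statement is a \emph{conjecture} in the paper's ``Further Work'' section; the paper offers no proof, so there is nothing to compare against. That said, your proposal contains a concrete error that is worth flagging.

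\medskip

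\textbf{Trapezohedron half.} Your plan is to orient the main axis along the $x$-axis, take a mirror plane of the $D_{nd}$ symmetry as the $xy$-plane, and declare the resulting planar cross-section to be a polygonal section in the sense of the paper. You then say that ``checking condition~2 is the main routine step.'' In fact condition~2 fails. Take the cube, which the paper explicitly identifies as a trapezohedron (over a regular skew hexagon) and also explicitly says has \emph{no} polygonal section. Concretely: with apices at $(1,1,1)$ and $(-1,-1,-1)$, the mirror plane $x=y$ meets the cube in the rectangle on $(1,1,1),(1,1,-1),(-1,-1,-1),(-1,-1,1)$; the four remaining vertices $(1,-1,\pm 1),(-1,1,\pm 1)$ project orthogonally to $(0,0,\pm 1)$, which lie on the \emph{boundary} of that rectangle, not in its interior. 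So $\pi^{-1}(\partial P)\cap Q\neq \partial P$, and your cross-section is not a polygonal section. More generally, the equatorial vertices of a trapezohedron form a \emph{skew} $2n$-gon; a mirror plane through the main axis will always have off-plane equatorial vertices whose projections land on $\partial P$. The paper's phrase ``trapezal polygonal section'' is meant to name a \emph{new} kind of section --- one modeled on this skew waist, analogous to how a prism section is not a polygonal section --- and a new bootstrap lemma for it. Your reduction to the existing polygonal-section framework and Theorem~A therefore cannot succeed.

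\medskip

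\textbf{Antiprism half.} Here your diagnosis of the obstruction is correct: the tangency of $S_v$ to a vertical edge, which drives the proof of Theorem~\ref{thm:prp}, is lost. Your proposed replacement --- a uniform-continuity estimate comparing $\pi(\sigma(v))$ with $\sigma|_{xy}(\pi(v))$ --- is in the right spirit but is not yet an argument: you would need to show that the $O(h|\sigma|)$ vertical drift of \emph{both} $n$-gons is dominated by the inward margin produced by the Rupert rotation of $P$, and it is not clear that the margin, which is itself $O(|\sigma|)$, beats the drift uniformly in $h$. The bootstrap step, once a base case is in hand, is indeed formal and your sketch of it is fine.
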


The reader who is interested in pursuing this conjecture should begin with the Cube and Octahedron to build their intuition for how these classes behave, since the Cube is a trapezohedron (over a regular skew hexagon) and the Octahedron is an antiprism over the triangle. Both polyhedra are well-behaved and well-understood.

In addition, this basic procedure --- proving a local lemma for a class and then showing that the class is locally (reverse) Rupert --- is perhaps a profitable approach for broader classes than just the antiprisms and trapezohedra.

\subsection{Duality}

Corollary~\ref{cor:dual} gives a nice symmetry - in many of the cases we cover, if a polyhedron is locally Rupert, then its dual is reverse locally Rupert. This relationship between Rupertness and duality was also noted by the authors in \cite{Steininger2021}. Choosing the correct sense of ``dual'' is critical to a conjecture of this shape, but based on the evidence and some intuition about Rupert and reverse Rupert, we conjecture the following.

\begin{conjecture}
A polyhedron $Q$ is locally Rupert if and only if its dual $D$ is reverse locally Rupert.
\end{conjecture}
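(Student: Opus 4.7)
The plan is to attack the conjecture via polar duality. Fix a canonical origin inside $Q$ (the centroid is a natural choice; for centrally symmetric $Q$, the center of symmetry is preferable), and let $D = Q^\circ$ be the polar dual about this point. The cornerstone of the argument is the classical identity
\[
(\pi K)^\circ = K^\circ \cap H,
\]
valid for any convex body $K$ containing the origin in its interior, where $H$ is the $xy$-plane and the polar on the right is taken inside $H$. This identity exchanges projections with equatorial slices under polar duality, and since polar duality reverses inclusions (and preserves strictness), Rupert-style containments get flipped.

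First I would apply this identity to unpack both sides of the conjecture. Using $(\rho Q)^\circ = \rho D$: if $\pi(\rho Q) \subseteq \intr(\pi Q)$ for some small $\rho \in SO(3)$, then passing to polars in $H$ yields $D \cap H \subseteq \intr(\rho D \cap H)$. Symmetrically, $D$ being locally reverse Rupert via some small $\sigma$ becomes $\sigma Q \cap H \subseteq \intr(Q \cap H)$. So the conjecture reduces to showing that some small rotation can strictly enlarge the equatorial slice of $D$ if and only if some small rotation can strictly shrink the equatorial slice of $Q = D^\circ$.

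The hard part will be establishing this equivalence of slice-deformation conditions. A natural tool is an infinitesimal support-function analysis at the identity: the strict inclusion $D \cap H \subsetneq \rho D \cap H$ becomes, to first order in $\rho$, a sign condition on the directional derivative of the slice support function $h_{D \cap H}(x)$ at every supporting direction $x \in H$, and analogously for $Q$. The $J_v$-technology of Section~3 gives an explicit description of first-order vertex motion under a small rotation, and polar duality provides the vertex-to-facet correspondence needed to pass between the two pictures. The crux is checking that these infinitesimal sign patterns pair up correctly across duality, since slice boundaries of $D$ correspond to shadow boundaries of $Q$ only via the silhouette structure of the polyhedra. A further subtlety is that polar duality depends on the choice of origin while Rupert-ness does not, so one must verify the statement is independent of this choice; this is plausibly the ``correct sense of dual'' the authors flag, and is naturally addressed by fixing an affinely canonical center such as the centroid and then showing the local (reverse) Rupert property is invariant under the remaining translation freedom.
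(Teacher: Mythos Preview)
This statement is a \emph{conjecture} in the paper, not a theorem: the authors explicitly list it under ``Further Work'' and offer no proof. So there is nothing in the paper to compare against, and your proposal should be read as an attempted attack on an open problem.

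Your polar-duality setup is correct and is the natural first move. The identity $(\pi_H K)^\circ = K^\circ \cap H$ (polar taken in $H$) genuinely exchanges projections for equatorial sections, and combined with $(\rho K)^\circ = \rho K^\circ$ for $\rho\in SO(3)$ it yields the equivalences you state: $Q$ locally Rupert $\Leftrightarrow$ the slice $D\cap H$ can be strictly enlarged by an arbitrarily small rotation, and $D$ locally reverse Rupert $\Leftrightarrow$ the slice $Q\cap H$ can be strictly shrunk by an arbitrarily small rotation.

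The gap is that this reduction does not make the problem easier; it merely trades the original ``projection'' conjecture for an isomorphic ``slice'' conjecture. After your reduction you must still prove that \emph{the slice of $D$ can be locally enlarged if and only if the slice of $D^\circ$ can be locally shrunk}. This is a statement of exactly the same shape as the one you started with, relating a body to its polar, and you have not supplied any mechanism for crossing that bridge. Your sketch of an ``infinitesimal support-function analysis'' does not indicate how the first-order sign conditions for slice-enlargement of $D$ and slice-shrinkage of $D^\circ$ are supposed to match up; the $J_v$ machinery of Section~3 tracks vertex motion under rotation, but the boundary of $D\cap H$ is governed by the \emph{faces} of $D$ meeting $H$, and the correspondence between these and the vertices of $Q$ is precisely the duality you are trying to exploit, so the argument risks circularity. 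Finally, the origin-dependence issue is more serious than you acknowledge: the paper's notion of ``dual'' is combinatorial, and different centers produce metrically distinct polar duals which need not all satisfy the conjecture simultaneously. Fixing the centroid is a reasonable convention, but showing that the locally (reverse) Rupert property is insensitive to this choice is itself nontrivial, since the definitions in the paper are tied to rotations about the origin.
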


This conjecture is shy of a complete duality conjecture, which would drop the word ``local'' in the above, but we are more hesitant about such a theorem since there is less structure without locality.

\subsection{Details on $J_v$}

The present paper gives an explicit formula for the function $J_v(a) = \pi(\rho_a^\delta\cdot v)$ in terms of spherical trigonometry and a co-ordinate system for the sphere. To prove our theorems, we only required a very crude analysis of $J_v$, but with an explicit form in place it is reasonable to expect that more could be gleaned from a closer analysis. In particular, extending Theorem~\ref{thm:dap} to include the triangle (and other trivial double-arch polygons) seems to be possible with this closer analysis. All convex polygons are either trivial or nontrivial double-arch,\footnote{The proof follows from taking the longest line $r$ between two vertices of the polygon. By maximality, this line splits the polygon into two arches, one of which may be trivial.} so such an extension would greatly simplify the statement of Theorem~\ref{thm:dap}.

\bibliographystyle{unsrt}
\bibliography{Rupert Passage Bibliography}

\begin{thebibliography}{1}

\bibitem{Jerrard2017}
Richard~P. Jerrard, John~E. Wetzel, and Liping Yuan.
\newblock {Platonic Passages}.
\newblock {\em Math. Mag.}, 90(2):87--98, 2017.

\bibitem{Tapp2016}
Kristopher Tapp.
\newblock {\em Matrix groups for undergraduates}, volume~79 of {\em Student
  Mathematical Library}.
\newblock American Mathematical Society, Providence, RI, second edition, 2016.

\bibitem{Steininger2021}
Jakob {Steininger} and Sergey {Yurkevich}.
\newblock {An algorithmic approach to Rupert's problem}.
\newblock {\em arXiv e-prints}, page arXiv:2112.13754, December 2021.

\bibitem{Scriba1968}
C.~J. Scriba.
\newblock Das problem des {P}rinzen {R}uprecht von der {P}falz.
\newblock {\em Praxis der Mathematik}, 10:241--246, 1968.

\bibitem{Chai2018}
Ying Chai, Liping Yuan, and Tudor Zamfirescu.
\newblock Rupert property of {A}rchimedean solids.
\newblock {\em Amer. Math. Monthly}, 125(6):497--504, 2018.

\end{thebibliography}

\end{document}